\documentclass[11pt]{article}
\usepackage[margin=1in]{geometry}
\usepackage{amsmath,amssymb,amsthm,mathtools}
\usepackage{enumitem}
\usepackage{hyperref}
\usepackage{comment}
\usepackage{xcolor}
\newtheorem{theorem}{Theorem}
\newtheorem{proposition}{Proposition}
\newtheorem{corollary}{Corollary}
\newtheorem{lemma}{Lemma}
\theoremstyle{remark}
\newtheorem{remark}{Remark}
\newtheorem{definition}{Definition}

\DeclareMathOperator{\In}{In}
\newcommand{\RT}{R_T}

\usepackage{tikz}
\usetikzlibrary{decorations.pathreplacing,calc,arrows.meta}
\usepackage{pgfplots}
\pgfplotsset{compat=1.18}

\title{Edge Subdivision and the Perron Eigenvalue of Tree Ricci Matrices}
\author{
Shuliang Bai\thanks{Beijing Yanqi Lake Institute of Mathematical Sciences and Applications, China. Email: \texttt{baishuliang@bimsa.cn}}
\and
Haoxuan Cheng\thanks{School of Mathematical Sciences, Fudan University, Shanghai 200433, China. Email: \texttt{hxcheng25@m.fudan.edu.cn}, \quad \textit{Corresponding author}}
\and
Bobo Hua\thanks{School of Mathematical Sciences, LMNS, Fudan University, Shanghai 200433, China. Email: \texttt{bobohua@fudan.edu.cn}}
}
\date{\today}

\begin{document}
\maketitle

\begin{abstract}
The Ricci matrix $R_T$ of a finite tree encodes its discrete Einstein metrics via the Perron eigenvector, with Lin-Lu-Yau's Ollivier Ricci curvature: $\kappa = -\lambda_{\max}(R_T)$. We show that edge subdivision, the natural operation of lengthening a tree, can decrease, preserve, or increase $\lambda_{\max}$. Compressing each branch into a scalar feedback function via the Schur complement reduces the spectral problem to a one-dimensional Chebyshev equation. We obtain an exact one-step trichotomy, a scalar transmission equation for arbitrary length, and the long-chain limit. Examples on double stars, including an asymmetric case where subdivision strictly increases $\lambda_{\max}$, illustrate the theory.

\end{abstract}

%\noindent\textbf{Keywords:} Ricci matrix; discrete Einstein metric;
%Lin--Lu--Yau curvature; edge subdivision; spectral radius; Schur complement;
%Chebyshev polynomials.

%\noindent\textbf{MSC 2020:} 05C50, 05C12, 53C21.

\section{Introduction}

Let $T=(V,E)$ be a finite tree. We \cite{BaiChengHua2026} introduced
the \emph{Ricci matrix} $R_T\in\mathbb{R}^{E\times E}$ of a tree, defined by
\[
(R_T)_{e,e'} =
\begin{cases}
-\bigl(\frac{1}{d_x}+\frac{1}{d_y}\bigr), & e=e'=\{x,y\},\\[4pt]
\frac{1}{d_z}, & e\neq e',\ e\cap e'=\{z\},\\[4pt]
0, & e\cap e'=\varnothing,
\end{cases}
\]
where $d_v$ denotes the degree of vertex $v$. A key property of this matrix
is that a discrete Einstein metric on a tree (i.e., edge weights with constant
Lin--Lu--Yau curvature \cite{LinLuYau2011}, in the sense of optimal-transport
Ricci curvature on graphs \cite{Ollivier2009}) corresponds exactly to the
Perron eigenvector of $R_T$, with the Einstein curvature given by
$\kappa=-\lambda_{\max}(R_T)$. Consequently, the sign and magnitude of
$\lambda_{\max}(R_T)$ entirely govern the corresponding curvature parameter,
and the study of its variation under elementary tree operations is of
significant geometric and dynamical interest.

In \cite{BaiChengHua2026}, we proved that $\lambda_{\max}(R_T)\le 0$
implies $T$ must be a caterpillar and established monotonicity
results for leaf attachment. Building on this foundation, Cheng \cite{Cheng2026} provided a complete classification of trees with
$\lambda_{\max}(R_T)\le 0$. In \cite{BCH2026leaf}, we investigated the asymptotic behavior of the sequence $\lambda_k = \lambda_{\max}(R_{T_k})$ obtained by repeatedly adding pendant edges at a fixed vertex $k$ times.
While these works largely focus on the operation of
\emph{leaf attachment}, the dual operation of \emph{edge subdivision}---
inserting a degree-$2$ vertex into an existing edge---has remained less
understood, even though it is the natural mechanism for ``lengthening'' a
tree. This paper addresses the following question:

\begin{center}
\emph{When an internal edge of a tree is subdivided, how does
$\lambda_{\max}(R_T)$ change? When is the change strictly negative, and
what governs its long-chain limit?}
\end{center}

Edge subdivision  is a classic graph operation with numerous applications in graph theory.  For the adjacency matrix $A(G)$, the classical Hoffman--Smith
theorem~\cite{HoffmanSmith1975} states that subdividing an edge on an
\emph{internal path} strictly decreases the spectral radius, with the single
exceptional family $\widetilde D_n$ on which it is preserved. (An internal path
is a path whose two endpoints have degree at least three and whose interior
vertices have degree two; subdividing a \emph{pendant} edge, by contrast,
typically increases the spectral radius.) Apart from this one exception, the
decrease is unconditional on the class of internal edges --- a monotonicity that
initiated Hoffman's program on limit points of graph spectra; see
\cite{G08,H72,V19,Z06} for related developments.

The geometric stakes are immediate: since $\kappa=-\lambda_{\max}(R_T)$ for a
discrete Einstein tree, every comparison of $\lambda_{\max}$ under subdivision
is a statement about how the Einstein curvature responds to lengthening an
internal edge. Our analysis shows this response is genuinely two-sided: unlike
the adjacency case, subdivision of an internal edge of $R_T$ can decrease,
preserve, or increase $\lambda_{\max}$ (Theorem~\ref{thm:one-step}). The
contrast with Hoffman--Smith is structural, not incidental, and the two
theories mirror each other at their critical cases: the adjacency spectral
radius decreases off the single exceptional family $\widetilde D_n$ (on which
it is constant), whereas for the Ricci matrix there is a critical family
$S_{3,3}$ on which $\lambda_{\max}\equiv0$ for \emph{every} subdivision length,
flanked on both sides by strict decrease and strict increase. The mechanism is
the degree-weighted off-diagonal entries $1/d_z$ of $R_T$, absent in $A(G)$: it
is precisely this degree dependence that breaks the one-sided monotonicity.

Degree-weighted adjacency matrices, where an edge $ij$ carries weight
$f(d_i,d_j)$, have been studied along similar lines; Shen and Shan
\cite{ShenShan2024} analyse the effect of subdividing an edge on the
$f$-spectral radius and find, in the same spirit, that the comparison depends on
the chosen edge rather than holding unconditionally, building on the interlacing
results of Li and Yang \cite{LiYang2023}. Our setting differs in both the matrix
and the conclusion: the Ricci coupling is genuinely two-sided in the degrees
(diagonal $-(1/d_x+1/d_y)$, off-diagonal $1/d_z$) rather than a single positive
weight $f(d_i,d_j)$, and we obtain an \emph{exact} trichotomy criterion
(Theorem~\ref{thm:one-step}) rather than one-directional sufficient conditions.

%This question on the Ricci matrix is more delicate:
%because its off-diagonal entries $1/d_z$ depend on the degree of the shared
%vertex, the effect of inserting a degree-$2$ vertex is sensitive to the
%\emph{balance} between the two sides of the subdivided edge. Subdivision
%can decrease, preserve, or even increase $\lambda_{\max}(R_T)$, depending
%on the global structure of the tree.
\medskip
We provide a systematic interface-based analysis of edge
subdivision on discrete Einstein tree $T$. Fix an internal edge $xy\in E(T)$, and for $\ell\ge 1$ let
$T^{(\ell)}$ denote the tree obtained by replacing $xy$ with a path of
length $\ell$. Deleting $xy$ splits $T$ into two rooted branches $(A,x)$
and $(B,y)$. By Schur eliminating the old-edge blocks, we compress the
two branches into scalar \emph{feedback functions} $\Phi_A(\lambda)$
and $\Phi_B(\lambda)$, and the entire spectral problem of $R_{T^{(\ell)}}$
reduces to a one-dimensional interface equation $D_\ell(\lambda)=0$
along the inserted degree-$2$ chain. From this reduction, we obtain
the following results.

\begin{itemize}
\item \textbf{Exact one-step criterion}
(Theorem~\ref{thm:one-step}).
Let $\lambda_1=\lambda_{\max}(R_T)$ and
$\lambda_2=\lambda_{\max}(R_{T^{(2)}})$. Then
\[
  \lambda_2<\lambda_1
  \;\Longleftrightarrow\;
  uv>-\frac{\lambda_1}{2},
\]
where $u,v$ are interface quantities determined by the Perron vector of
$R_T$ (see Definition~\ref{def:uv}).

\item \textbf{Degree-dependent sufficient condition}
(Corollary~\ref{cor:degree}). If $d_x\ge 3$, $d_y\ge 3$, and
\[
  \lambda_1>
  \max\!\left\{\frac{2}{d_x(d_x-2)},\frac{2}{d_y(d_y-2)}\right\},
\]
then $\lambda_2<\lambda_1$.

\item \textbf{Scalar reduction for arbitrary length}
(Proposition~\ref{prop:general}). For every $\ell\ge 2$,
\[
  \lambda\in\sigma(R_{T^{(\ell)}})
  \;\Longleftrightarrow\;
  D_\ell(\lambda)
  =
  -\Phi_A\Phi_B\,U_{\ell-2}(1+\lambda)
  +(\Phi_A+\Phi_B)\,U_{\ell-3}(1+\lambda)
  -U_{\ell-4}(1+\lambda)=0,
\]
where $U_n$ denotes the second-kind Chebyshev polynomial, with the
conventions $U_{-1}=0$ and $U_{-2}=-1$.

\item \textbf{Long-chain limit equation}
(Corollary~\ref{cor:limit}). If
$\lambda_\ell:=\lambda_{\max}(R_{T^{(\ell)}})$ eventually lies in a
compact positive interval avoiding the poles of $\Phi_A,\Phi_B$, then
every accumulation point $\lambda_\ast$ of $\{\lambda_\ell\}$ satisfies
$\Phi_A(\lambda_\ast)=q(\lambda_\ast)$ or $\Phi_B(\lambda_\ast)=q(\lambda_\ast)$,
where $q(\lambda)=1+\lambda-\sqrt{\lambda(\lambda+2)}$ is the decaying
root of the chain recurrence.
\end{itemize}

As an application, we analyze the symmetric double-star family
$T_{s,\ell}$ obtained by subdividing the central edge of $S_{s,s}$.
For this family the interface equation reduces explicitly to
$R_\ell(\lambda)=\beta_s(\lambda)$, yielding the threshold phenomenon at
$s=3$, strict monotonicity $\lambda_{s,\ell+1}<\lambda_{s,\ell}$ for $s>3$
with positive limit, and the explicit value
$\lambda_{s,2}=(\sqrt{s+1}-2)/(s+1)$ together with the asymptotics
$\lambda_{s,\ell}\sim\sqrt{2/(\ell(s+1))}$ and
$\lambda_{s,\infty}\sim 2^{1/3}(s+1)^{-2/3}$ as $s\to\infty$.

In addition, we present a numerical illustration for an asymmetric 
double star ($d_x=3$, $d_y=6$), which demonstrates that subdivision can 
\emph{increase} $\lambda_{\max}$ and that the long-chain limit is governed 
by the branch with the larger leaf count.

\medskip

The Schur complement is a standard tool for reducing the dimension of a spectral problem when a graph admits a cutset whose removal separates the graph into disjoint components. 

This technique has been successfully applied to study edge subdivision for adjacency matrices~\cite{hoffman_smith,kumar2025subdivision} and Laplacian matrices~\cite{chen2018,biane_chapuy}. In the present work, we adapt it to the Ricci matrix $R_T$ on trees. Unlike the adjacency case---where subdivision strictly decreases the spectral radius---we find that subdivision can increase, decrease, or preserve the largest eigenvalue, depending on the balance of the two branches. This non-monotonicity reflects the nonlinear coupling of weights and degrees in discrete Einstein metrics.
\medskip
The paper is organized as follows. Section~\ref{sec:notation} collects the
matrix-analysis preliminaries and introduces the Ricci matrix and the branch
feedback functions. Section~\ref{sec:one-step} treats the one-step case and
proves the exact comparison criterion together with the Schur-complement
sufficient condition. Section~\ref{sec:general} returns to arbitrary
subdivision length and develops the scalar transmission equation and the
long-chain limit criterion. Section~\ref{sec:ss} first analyzes the symmetric double-star family in detail, 
and then presents a numerical illustration for an asymmetric double star.

\section{Ricci Matrices and Branch Feedback Functions}\label{sec:notation}

\subsection{The Ricci matrix and discrete Einstein trees}

Let $T=(V,E)$ be a finite tree. For a vertex $v$, let $d_v$ be its degree.
We call an edge $xy\in E(T)$ \emph{internal} if both endpoints have degree at
least two, i.e.\ $d_x\ge2$ and $d_y\ge2$; equivalently, neither endpoint is a
leaf. For an edge $e=\{x,y\}$, the \emph{Lin--Lu--Yau Ricci curvature} of a
weighted tree with edge weights $w$ is \cite{BaiChengHua2026}
\[
\kappa_{xy} = -\left( \frac{S_x-2w_{xy}}{w_{xy}d_x} + \frac{S_y-2w_{xy}}{w_{xy}d_y} \right),
\qquad S_v = \sum_{u\sim v} w_{uv}.
\]
Its \emph{Ricci matrix} $\RT$ is
\[
(\RT)_{e,e'}
=
\begin{cases}
-\left(\dfrac1{d_x}+\dfrac1{d_y}\right), & e=e'=\{x,y\},\\[4pt]
\dfrac1{d_z}, & e\neq e',\ e\cap e'=\{z\},\\[4pt]
0, & e\cap e'=\varnothing.
\end{cases}
\]
A tree is \emph{discrete Einstein} if there is a positive weight function $w$
with $\kappa_{xy}\equiv\kappa$ constant. We \cite{BaiChengHua2026} proved that
this is equivalent to $\RT w=\lambda w$ with $\lambda=-\kappa$. By
Perron--Frobenius (Lemma~\ref{lem:pf} below), $\lambda_{\max}(\RT)$ is simple
with a strictly positive eigenvector (the Perron vector); hence
$\kappa=-\lambda_{\max}(\RT)$.

\subsection{Preliminaries}\label{sec:prelim}

We collect the matrix-analysis facts used throughout. All matrices below are
real symmetric. For such $M$ we write $M\succ0$ ($M\succeq0$) for positive
(semi)definiteness, $\sigma(M)$ for its spectrum, $\lambda_{\max}(M)$ for its
largest eigenvalue, and $\In(M)=(n_+,n_-,n_0)$ for its \emph{inertia}, the
numbers of positive, negative and zero eigenvalues counted with multiplicity.

\subsubsection{Schur complements}

Let $M=\begin{psmallmatrix} P & Q\\ Q^{T} & S\end{psmallmatrix}$ be symmetric
with the block $S$ invertible. The \emph{Schur complement of $S$ in $M$} is
$M/S:=P-QS^{-1}Q^{T}$. With $L=\begin{psmallmatrix} I & QS^{-1}\\ 0 & I\end{psmallmatrix}$
one has the congruence
\begin{equation}\label{eq:schur-congruence}
  M=L\begin{pmatrix} M/S & 0\\ 0 & S\end{pmatrix}L^{T},
\end{equation}
from which the following standard facts are immediate; see \cite{Zhang2005}.

\begin{lemma}[Schur complement]\label{lem:schur}
With the notation above:
\begin{enumerate}[label=\rm(S\arabic*)]
\item \textbf{(Determinant)} $\det M=\det S\cdot\det(M/S)$; in particular $M$
is singular iff $M/S$ is singular.
\item \textbf{(Inertia additivity; Haynsworth \cite{Haynsworth1968})}
$\In(M)=\In(S)+\In(M/S)$. In particular $M\succ0\iff S\succ0$ and $M/S\succ0$.
\item \textbf{(Elimination)} If $M\begin{psmallmatrix}\xi\\ \eta\end{psmallmatrix}=0$,
then $\eta=-S^{-1}Q^{T}\xi$ and $\xi$ solves $(M/S)\xi=0$.
\end{enumerate}
\end{lemma}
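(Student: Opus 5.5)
All three parts follow from the block factorization \eqref{eq:schur-congruence}. The plan is to verify that identity by direct multiplication and then read off (S1)--(S3) in turn. Set $L=\begin{psmallmatrix} I & QS^{-1}\\ 0 & I\end{psmallmatrix}$. Using the symmetry of $S$, so that $(S^{-1})^{T}=S^{-1}$, we have
\[
L\begin{pmatrix} M/S & 0\\ 0 & S\end{pmatrix}L^{T}
=\begin{pmatrix} M/S+QS^{-1}Q^{T} & Q\\ Q^{T} & S\end{pmatrix}=M.
\]
This is the only computation needed. Everything else is bookkeeping on the factorization.

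For (S1), take determinants. The matrix $L$ is block unitriangular, so $\det L=\det L^{T}=1$, and hence $\det M=\det(M/S)\det S$. Since $\det S\neq0$, $M$ is singular exactly when $M/S$ is.

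For (S2), $L$ is invertible, so $M$ is congruent to $\operatorname{diag}(M/S,S)$. Sylvester's law of inertia says congruence preserves inertia. The inertia of a block-diagonal symmetric matrix is the sum of the inertias of its blocks, which gives $\In(M)=\In(S)+\In(M/S)$. The positive-definiteness statement follows by comparing $n_+$ with the size: $M\succ0$ means $n_-(M)=n_0(M)=0$. By additivity this holds if and only if both $S$ and $M/S$ have no nonpositive eigenvalues.

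For (S3), write out $M\begin{psmallmatrix}\xi\\ \eta\end{psmallmatrix}=0$ as the two block rows
\[
P\xi+Q\eta=0,\qquad Q^{T}\xi+S\eta=0.
\]
The second row gives $\eta=-S^{-1}Q^{T}\xi$. Substituting this into the first row gives $(P-QS^{-1}Q^{T})\xi=0$, that is, $(M/S)\xi=0$.

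No step is a real obstacle. The only point needing care is the symmetry of $S$ used in the transpose of $L$ when checking the factorization. One could also remark that the converse of (S3) holds: every kernel vector $\xi$ of $M/S$ lifts to a kernel vector of $M$. This is immediate from the same two equations, and it is what the later reductions will actually use.
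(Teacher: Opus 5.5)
Your proof is correct and follows the paper's argument essentially verbatim. As in the paper, (S1) and (S2) come from the congruence \eqref{eq:schur-congruence} via $\det L=1$ and Sylvester's law of inertia, and (S3) comes from solving the lower block row for $\eta$ and substituting into the upper one; your explicit check of the factorization (using $S=S^{T}$) and your remark on the converse of (S3) are small additions that the paper leaves implicit.
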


\begin{proof}
Taking determinants in \eqref{eq:schur-congruence} and using $\det L=1$ gives
(S1). Since congruent symmetric matrices have equal inertia (Sylvester) and the
block-diagonal matrix in \eqref{eq:schur-congruence} has inertia
$\In(M/S)+\In(S)$, we obtain (S2). For (S3), the lower block of
$M\begin{psmallmatrix}\xi\\ \eta\end{psmallmatrix}=0$ reads $Q^{T}\xi+S\eta=0$,
so $\eta=-S^{-1}Q^{T}\xi$; substituting into the upper block gives
$(M/S)\xi=0$.
\end{proof}

Two uses recur. \emph{Spectral comparison:} fixing a principal block of
$\lambda I-R$ and forming its Schur complement reduces questions about
$\sigma(R)$ to a smaller matrix---by (S1), $\lambda\in\sigma(R)$ iff that Schur
complement is singular, and by (S2), deciding $\lambda\gtrless\lambda_{\max}(R)$
becomes a definiteness check. \emph{Branch elimination:} applying (S3) to the
eigen-equation $(\lambda I-R)\mathbf w=0$ removes branch coordinates, producing
the feedback functions of Section~\ref{sec:feedback}.

\subsubsection{Perron--Frobenius structure}

\begin{lemma}[Perron--Frobenius preparation]\label{lem:pf}
Let $T$ be a tree with $|E(T)|\ge 2$.
\begin{enumerate}[label=\rm(\arabic*)]
\item $\RT+2I$ is entrywise nonnegative and irreducible. Consequently
$\lambda_{\max}(\RT)$ is a simple eigenvalue admitting a strictly positive
eigenvector, and it is the unique eigenvalue of $\RT$ with a positive
eigenvector.
\item If $N=\RT|_{F}$ is the principal submatrix on a proper subset
$F\subsetneq E(T)$, then $\lambda_{\max}(N)<\lambda_{\max}(\RT)$.
\end{enumerate}
\end{lemma}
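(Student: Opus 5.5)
The plan is to shift $\RT$ by $2I$ and apply the classical Perron--Frobenius theorem to the resulting nonnegative irreducible matrix. Then I transfer the conclusions back to $\RT$ and use a standard monotonicity argument for principal submatrices.

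\emph{Nonnegativity.} Each diagonal entry of $\RT+2I$ equals $2-(1/d_x+1/d_y)$. Since $d_x,d_y\ge1$, this is at least $0$. Each off-diagonal entry is either $1/d_z>0$ or $0$.

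\emph{Irreducibility.} The off-diagonal pattern of $\RT+2I$ is the adjacency pattern of the line graph $L(T)$: two edges are joined exactly when they share a vertex. Since $T$ is connected and $|E(T)|\ge2$, the line graph $L(T)$ is connected, so $\RT+2I$ is irreducible.

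\emph{Transfer to $\RT$.} Perron--Frobenius gives that $\rho(\RT+2I)$ is a simple eigenvalue with a strictly positive eigenvector. It also gives that no other eigenvalue has a nonnegative eigenvector. (Sketch: if $(\RT+2I)z=\mu z$ with $z\ge0$, pair with the positive left Perron vector $p$; then $\mu\, p^Tz=\rho\, p^Tz$ and $p^Tz>0$, so $\mu=\rho$.) Because $\RT+2I$ is symmetric, its spectrum is real. Its spectral radius is its largest eigenvalue, since the Perron root dominates $|\mu|$ for every eigenvalue $\mu$. Shifting back by $-2I$ gives all the claims of (1) for $\lambda_{\max}(\RT)=\rho-2$.

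\emph{Part (2).} Write $M=\RT+2I$ and $N'=M|_F=N+2I$. I would argue through Rayleigh quotients.
\begin{enumerate}[label=\rm(\roman*)]
\item Let $z\ge0$ be a unit Perron vector of $N'$; this is legitimate because $N'$ is nonnegative, possibly reducible, and its largest eigenvalue still has a nonnegative eigenvector.
\item Extend $z$ by zeros to a vector $\tilde z$ on $E(T)$. Then $\lambda_{\max}(N')=\tilde z^TM\tilde z\le\lambda_{\max}(M)$.
\item If equality held, $\tilde z$ would maximize the Rayleigh quotient of the symmetric matrix $M$, so it would be an eigenvector for $\lambda_{\max}(M)$. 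By (1), $\tilde z$ would then be a multiple of the strictly positive Perron vector.
\item This is impossible, because $\tilde z$ vanishes on $E(T)\setminus F\neq\varnothing$.
\end{enumerate}
Hence the inequality is strict, and subtracting $2$ gives $\lambda_{\max}(N)<\lambda_{\max}(\RT)$.

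The main point needing care is in (2) when $F$ induces a disconnected sub-line-graph. In that case $N'$ is reducible, and I only use the existence of a nonnegative top eigenvector, not a positive one. All other steps are routine.
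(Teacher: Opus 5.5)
Your proof is correct. Part (1) follows the paper's argument: the diagonal is nonnegative because $d_x,d_y\ge1$, irreducibility comes from connectivity of the line graph $L(T)$, and then Perron--Frobenius plus the shift by $-2I$ give the claims. Your extra justifications, that $\rho(\RT+2I)$ is the top eigenvalue and that no other eigenvalue has a nonnegative eigenvector, are sound additions.

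For part (2) your route differs. The paper simply cites strict Perron--Frobenius monotonicity: $\rho(B)<\rho(A)$ whenever $B$ is a proper principal submatrix of an irreducible nonnegative $A$. You instead reprove this in the symmetric setting. You extend a top eigenvector of $N+2I$ by zeros, compare Rayleigh quotients, and use the simplicity and strict positivity from (1) to rule out equality. The paper's citation also covers non-symmetric matrices, while your argument is self-contained and uses only symmetry plus part (1).

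One simplification: the nonnegativity of $z$ in step (i) is never used. Any unit top eigenvector of $N+2I$ works. The contradiction in (iii)--(iv) comes solely from the fact that every eigenvector of $\RT+2I$ for its top eigenvalue is a nonzero multiple of a strictly positive vector, so it has no zero entries. The reducible case you flag as needing care is therefore harmless.
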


\begin{proof}
(1) Each diagonal entry of $\RT$ equals $-(1/d_x+1/d_y)\ge-2$, so $\RT+2I$ has
nonnegative diagonal; its off-diagonal entries are $1/d_z\ge0$. Two edges have a
nonzero off-diagonal entry exactly when they share a vertex, i.e.\ are adjacent
in the line graph $L(T)$; since $T$ is connected, $L(T)$ is connected, so
$\RT+2I$ is irreducible. Perron--Frobenius then yields a simple largest
eigenvalue with a strictly positive eigenvector, unique among eigenvalues with a
positive eigenvector; the shift by $-2I$ leaves eigenvectors unchanged.
(2) $N+2I$ is a proper principal submatrix of the irreducible nonnegative matrix
$\RT+2I$, so $\lambda_{\max}(N+2I)<\lambda_{\max}(\RT+2I)$ by strict
Perron--Frobenius monotonicity; subtract $2$.
\end{proof}

\begin{lemma}[Eigenvalue monotonicity; Weyl]\label{lem:weyl}
If $A\preceq B$ are real symmetric, then $\lambda_{\max}(A)\le\lambda_{\max}(B)$;
in particular, adding a positive semidefinite matrix does not decrease
$\lambda_{\max}$ \cite[Cor.~4.3.12]{HornJohnson2013}.
\end{lemma}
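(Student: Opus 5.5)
The statement to prove is Lemma~\ref{lem:weyl}: if $A\preceq B$, then $\lambda_{\max}(A)\le\lambda_{\max}(B)$. I would use the Rayleigh--Ritz variational characterization of the largest eigenvalue of a real symmetric matrix,
\[
\lambda_{\max}(M)=\max_{\|x\|=1} x^{T}Mx .
\]
This identity follows from the spectral theorem. Write $M=\sum_i \mu_i v_iv_i^{T}$ with an orthonormal eigenbasis $\{v_i\}$. For a unit vector $x$ we then have
\[
x^{T}Mx=\sum_i \mu_i (v_i^{T}x)^2\le \mu_{\max}\sum_i (v_i^{T}x)^2=\mu_{\max},
\]
and equality holds at $x=v_{\max}$.

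With this in hand, the comparison is short. Let $x$ be a unit eigenvector of $A$ for $\lambda_{\max}(A)$. The hypothesis $A\preceq B$ means exactly that $B-A$ is positive semidefinite, so $x^{T}(B-A)x\ge0$. Therefore
\[
\lambda_{\max}(A)=x^{T}Ax\le x^{T}Bx\le \max_{\|y\|=1}y^{T}By=\lambda_{\max}(B).
\]

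The ``in particular'' clause is the special case $B=A+P$ with $P\succeq0$. Here $B-A=P\succeq0$ holds by definition, so the inequality above gives $\lambda_{\max}(A)\le\lambda_{\max}(A+P)$.

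There is no real obstacle in this argument. The only point needing care is establishing the Rayleigh quotient identity, which is the standard orthonormal-diagonalization computation shown above. Alternatively, one can simply cite \cite[Cor.~4.3.12]{HornJohnson2013}, which states the full Weyl monotonicity for all ordered eigenvalues; we only need the top one.
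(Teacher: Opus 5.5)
Your proof is correct: the Rayleigh--Ritz bound followed by testing against a top eigenvector of $A$ gives exactly $\lambda_{\max}(A)\le\lambda_{\max}(B)$, and the positive semidefinite case follows immediately. The paper gives no argument of its own and simply cites Horn--Johnson, Cor.~4.3.12, which rests on the same variational principle; your argument is the self-contained top-eigenvalue case of that result, so nothing is missing.
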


\begin{lemma}\label{lem:mu-gt--1}
For any tree $T$ with at least three edges, $\lambda_{\max}(R_T) >  -1$.
\end{lemma}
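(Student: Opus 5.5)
The plan is to compare $\RT$ with a small principal submatrix and use the strict monotonicity of Lemma~\ref{lem:pf}(2). Since $T$ is a tree with at least three edges, it is connected and has at least two edges. Hence there are two distinct edges $e=\{x,z\}$ and $f=\{z,y\}$ sharing a vertex $z$.

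Let $F=\{e,f\}$ and let $N=\RT|_F$ be the corresponding $2\times2$ principal submatrix:
\[
N=\begin{pmatrix} -\bigl(\frac1{d_x}+\frac1{d_z}\bigr) & \frac1{d_z}\\[2pt] \frac1{d_z} & -\bigl(\frac1{d_z}+\frac1{d_y}\bigr)\end{pmatrix}.
\]
First I bound $\lambda_{\max}(N)$ from below by the Rayleigh quotient at the test vector $(1,1)^T$:
\[
\lambda_{\max}(N)\ \ge\ \frac{(1,1)N(1,1)^T}{2}
=\frac{-\frac1{d_x}-\frac1{d_y}-\frac2{d_z}+\frac2{d_z}}{2}
=-\frac12\Bigl(\frac1{d_x}+\frac1{d_y}\Bigr)\ \ge\ -1.
\]
The last inequality holds because every degree is at least $1$. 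The cancellation of the $d_z$ terms is the key point: the shared vertex contributes nothing, whatever its degree.

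Second, since $|E(T)|\ge3$, the set $F$ is a proper subset of $E(T)$. Lemma~\ref{lem:pf}(2) therefore gives $\lambda_{\max}(\RT)>\lambda_{\max}(N)\ge -1$, which is the claim.

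There is no real obstacle. The only place the hypothesis "at least three edges" enters is to make $F$ proper, and it is needed there. If $T$ has exactly two edges, the bound $-1$ is attained: for $P_3$, $d_x=d_y=1$ and $d_z=2$, so $\RT$ has Perron vector $(1,1)^T$ with eigenvalue exactly $-1$. Strictness must therefore come from Perron--Frobenius monotonicity rather than from the Rayleigh estimate. One must also check that Lemma~\ref{lem:pf}(2) applies, i.e.\ $|E(T)|\ge2$, which is immediate.
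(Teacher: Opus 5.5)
Your argument is correct, and it takes a different route from the paper. The paper splits on the maximum degree $\Delta$. For $\Delta\ge3$ it imports the bound $\lambda_{\max}(\RT)\ge-2/\Delta$ from Proposition~4 of \cite{BaiChengHua2026}. For paths it evaluates the Rayleigh quotient of $(1,1,1)^T$ on a block of three consecutive edges, obtaining $\lambda_{\max}(\RT)\ge-2/3$. In both cases strictness comes for free from the numerical margin.

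You instead use a single $2\times2$ block of two adjacent edges. There the Rayleigh quotient of $(1,1)^T$ equals $-\tfrac12(1/d_x+1/d_y)\ge-1$, whatever $d_z$ is. You then get strictness from the strict Perron--Frobenius monotonicity of Lemma~\ref{lem:pf}(2), which is exactly where the hypothesis $|E(T)|\ge3$ enters.

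Your version is self-contained, since it uses only a lemma already proved in the paper rather than an external citation. It needs no case split, and the $P_3$ example shows cleanly that the three-edge hypothesis is sharp. The paper's version buys an explicit quantitative gap ($-2/\Delta$ or $-2/3$), which yours does not give. However, only the strict inequality is used later, namely for $\operatorname{tr}S(\lambda_1)=\lambda_1+1>0$ in Theorem~\ref{thm:one-step}.

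Incidentally, the paper's displayed block with diagonal $(-\tfrac32,-1,-\tfrac32)$ occurs literally only for $P_4$. For longer paths, three consecutive edges have diagonal entries at least as large, so the bound only improves. Your uniform argument never has to deal with this detail.
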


\begin{proof}
Let $\Delta$ be the maximum degree of $T$. If $\Delta\ge 3$, then
Proposition~4 of \cite{BaiChengHua2026} gives
\[
  \lambda_{\max}(R_T)\ge -\frac2\Delta>-1.
\]
It remains to treat $\Delta=2$, i.e.\ $T$ is a path. Since $|E(T)|\ge 3$,
let $M$ denote the following $3\times 3$ principal block of $R_T$:
\[
  M=
  \begin{pmatrix}
    -\frac32 & \frac12 & 0\\[2pt]
    \frac12 & -1 & \frac12\\[2pt]
    0 & \frac12 & -\frac32
  \end{pmatrix}.
\]
If $\eta=(1,1,1)^T$ and $\widetilde\eta\in\mathbb R^{E(T)}$ is obtained by placing
the entries of $\eta$ on these three coordinates and zeros elsewhere, then
$\widetilde\eta^{T}R_T\widetilde\eta=\eta^{T}M\eta$ and
$\widetilde\eta^{T}\widetilde\eta=\eta^{T}\eta$. Hence, by the Rayleigh quotient,
\[
  \frac{\eta^TR_T\eta}{\eta^T\eta}\ge
  \frac{-\frac32-1-\frac32+2\cdot\frac12+2\cdot\frac12}{3}
  =-\frac23>-1,
\]
so $\lambda_{\max}(R_T)>-1$.
\end{proof}

\section{One-Step Subdivision}\label{sec:one-step}
%==========================================================================

We first treat the one-step case $\ell=1\to\ell=2$ by a direct argument
that uses no auxiliary functions and involves only a $2\times2$ matrix.

%\subsection{Setup}\label{sec:one-step-setup}

Fix an internal edge $xy\in E(T)$ with $d_x\ge2$, $d_y\ge2$.
Write $\lambda_1:=\lambda_{\max}(R_T)$. The \emph{subdivided tree}
$T^{(2)}$ is obtained by inserting a new vertex $v_1$ of degree~$2$
into $xy$, replacing it by two edges $e_1=xv_1$ and $e_2=v_1y$:
\[
  T:\quad
  \underbrace{\cdots}_{\text{branch }A}
  \text{---}\, x \,\text{---}\, y \,\text{---}
  \underbrace{\cdots}_{\text{branch }B}
  \qquad\longrightarrow\qquad
  T^{(2)}:\quad
  \underbrace{\cdots}_{\text{branch }A}
  \text{---}\, x \,\text{---}\, v_1 \,\text{---}\, y \,\text{---}
  \underbrace{\cdots}_{\text{branch }B}
\]
The branches $A$ (containing $x$) and $B$ (containing $y$) are unchanged.
Write $\lambda_2:=\lambda_{\max}(R_{T^{(2)}})$. The degrees of $x$ and $y$
are the same in both trees (each still has $d_x$ and $d_y$ incident edges
respectively); only the new vertex $v_1$ has degree~$2$.

Let $E_0=E(T)\setminus\{xy\}$ and let $R_0$ be the principal submatrix of
$R_T$ on $E_0$. Since the subdivision changes no degrees of vertices
incident to edges in $E_0$, the matrix $R_0$ is also the principal
submatrix of $R_{T^{(2)}}$ on $E_0$.

\subsection{The $2\times2$ Schur complement at $\lambda_1$}

\begin{lemma}\label{lem:lambda1-above-R0}
Let $\lambda_1 = \lambda_{\max}(R_T)$. Then $\lambda_1 > \lambda_{\max}(R_0)$; 
in particular $\lambda_1 I - R_0$ is positive definite.
\end{lemma}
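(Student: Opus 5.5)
This is a direct consequence of the strict monotonicity in Lemma~\ref{lem:pf}(2). The matrix $R_0$ is the principal submatrix of $R_T$ indexed by $E_0=E(T)\setminus\{xy\}$, and $E_0$ is a proper subset of $E(T)$. So the plan is to check that Lemma~\ref{lem:pf} applies and then read off positive definiteness from the spectrum.

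\textbf{Applicability.} Lemma~\ref{lem:pf} requires $|E(T)|\ge 2$. Since $xy$ is internal, $d_x\ge2$, so $x$ has a neighbour other than $y$. Hence $T$ has at least two edges and $E_0\neq\varnothing$, which also means $R_0$ is a genuine nonempty matrix.

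\textbf{Strict inequality.} Lemma~\ref{lem:pf}(2), with $F=E_0\subsetneq E(T)$, gives
\[
\lambda_{\max}(R_0)<\lambda_{\max}(R_T)=\lambda_1 .
\]
Behind this is the fact that $R_T+2I$ is nonnegative and irreducible, because the line graph of $T$ is connected. Removing the row and column of $xy$ therefore strictly lowers the Perron root, even though $R_0+2I$ itself may be reducible, as happens when $E_0$ splits into the two branches.

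\textbf{Positive definiteness.} $R_0$ is real symmetric, so the eigenvalues of $\lambda_1 I-R_0$ are the numbers $\lambda_1-\mu$ with $\mu\in\sigma(R_0)$. Each of these is at least $\lambda_1-\lambda_{\max}(R_0)>0$, so $\lambda_1 I-R_0\succ0$.

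\textbf{Main obstacle.} There is none of substance; the strict form of Perron--Frobenius monotonicity is already packaged in Lemma~\ref{lem:pf}(2). If I wanted a self-contained argument, I would take a unit eigenvector $\xi$ of $R_0$ for $\lambda_{\max}(R_0)$ and replace it by $|\xi|$, which does not decrease the Rayleigh quotient because the off-diagonal entries are nonnegative. Extending $|\xi|$ by zero to $E(T)$ gives $\lambda_1\ge\lambda_{\max}(R_0)$. If equality held, the extended vector would be a Perron vector of $R_T$ with a zero entry, contradicting strict positivity.
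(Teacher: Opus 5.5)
Your proposal is correct and follows the paper's proof, which simply observes that $R_0+2I$ is a proper principal submatrix of the irreducible nonnegative matrix $R_T+2I$ and invokes Lemma~\ref{lem:pf}(2). You also spell out two steps the paper leaves implicit: that internality of $xy$ gives $|E(T)|\ge 2$ and $E_0\neq\varnothing$, and how $\lambda_1 I-R_0\succ0$ follows from the spectral gap.
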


\begin{proof}
$R_0+2I$ is a proper principal submatrix of the irreducible nonnegative
matrix $R_T+2I$, so $\lambda_{\max}(R_0)<\lambda_1$ by
Lemma~\ref{lem:pf}(2).
\end{proof}

We now compute the Schur complement of $\lambda_1 I-R_0$ inside
$\lambda_1 I-R_{T^{(2)}}$. Write $G:=(\lambda_1 I-R_0)^{-1}$ (positive
definite by Lemma~\ref{lem:lambda1-above-R0}). Define indicator vectors
$p,q\in\mathbb R^{E_0}$:
\[
  p_f=\begin{cases}1,&f\ni x,\\0,&\text{otherwise},\end{cases}\qquad
  q_f=\begin{cases}1,&f\ni y,\\0,&\text{otherwise},\end{cases}
\]
and set $s_x:=p^TGp/d_x^2$, $s_y:=q^TGq/d_y^2$, $g:=p^TGq/(d_xd_y)$.

The Schur complement is the $2\times2$ matrix
\begin{equation}\label{eq:S-def}
  S(\lambda_1)
  =\begin{pmatrix} a & -h\\ -h & b\end{pmatrix},
\end{equation}
where $a=\lambda_1+\tfrac1{d_x}+\tfrac12-s_x$,
$b=\lambda_1+\tfrac1{d_y}+\tfrac12-s_y$, $h=\tfrac12+g$.

\begin{lemma}\label{lem:g-zero}
$g(\lambda_1)=0$, hence $h=\tfrac12$.
\end{lemma}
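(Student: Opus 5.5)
Removing the edge $xy$ from $T$ leaves the forest on $E_0$, whose two components are branch $A$ (edges of the subtree containing $x$) and branch $B$ (edges of the subtree containing $y$). No edge of $A$ shares a vertex with an edge of $B$, because the only link between the two sides was $xy$ itself. The plan is to exploit the resulting block structure. After ordering $E_0=E(A)\sqcup E(B)$, the matrix $R_0$ is block diagonal:
\[
R_0=\begin{pmatrix} R_A & 0\\ 0 & R_B\end{pmatrix}.
\]
Here $R_A$ and $R_B$ are the principal submatrices of $R_T$ on $E(A)$ and $E(B)$. Hence $\lambda_1 I-R_0$ is block diagonal too. It is invertible by Lemma~\ref{lem:lambda1-above-R0}, and its inverse $G$ is block diagonal with blocks $(\lambda_1 I-R_A)^{-1}$ and $(\lambda_1 I-R_B)^{-1}$, with zero off-diagonal blocks.

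Next, look at the supports of $p$ and $q$. The vector $p$ is supported on edges $f\in E_0$ containing $x$, and every such edge lies in $A$. The vector $q$ is supported on edges containing $y$, all of which lie in $B$. Note that $xy\notin E_0$, so neither vector touches it. Writing $p=(p_A,0)$ and $q=(0,q_B)$ in the block decomposition gives
\[
p^TGq=p_A^T\cdot 0\cdot q_B=0,
\]
so $g(\lambda_1)=p^TGq/(d_xd_y)=0$. Substituting into $h=\tfrac12+g$ gives $h=\tfrac12$.

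The only point needing care is the claim that no edge of $A$ is adjacent to an edge of $B$, so that $R_0$ has no cross entries. This holds because $T$ is a tree: an edge $f\ni x$ with $f\neq xy$ cannot also contain $y$, and more generally the vertex sets of the two components of $T-xy$ are disjoint. No other step presents an obstacle. One should also note that $G$ is evaluated at $\lambda_1$, where invertibility is guaranteed. The same argument gives $g\equiv 0$ for every $\lambda$ outside $\sigma(R_A)\cup\sigma(R_B)$.
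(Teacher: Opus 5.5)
Your proof is correct and is essentially the paper's argument: deleting $xy$ makes $R_0$, and hence $G=(\lambda_1 I-R_0)^{-1}$, block diagonal with respect to $E_0=E(A)\sqcup E(B)$. Since $p$ is supported on the $A$-block and $q$ on the $B$-block, $p^TGq=0$, which gives $g(\lambda_1)=0$ and $h=\tfrac12$.
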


\begin{proof}
Deleting $xy$ disconnects $T$: every edge in the support of $p$ lies in the
component of $x$, every edge in the support of $q$ lies in the component
of $y$. Hence $R_0$, and therefore $G=(\lambda_1I-R_0)^{-1}$, is
block-diagonal with respect to this decomposition. Since $p$ and $q$ are
supported on different blocks, $p^TGq=0$.
\end{proof}

\subsection{Perron vector computation}

Let $\mathbf w>0$ be the Perron vector of $R_T$ and $w:=\mathbf w_{xy}$.
For each vertex $v$, write $S_v:=\sum_{f\ni v}\mathbf w_f$ for the sum of
the Perron weights of all edges incident to $v$.

\begin{definition}\label{def:uv}
Define the \emph{interface quantities}
\begin{equation}\label{eq:uv-def}
  u:=\frac{S_y-2w}{d_yw},\qquad v:=\frac{S_x-2w}{d_xw}.
\end{equation}
\end{definition}

\begin{proposition}[Interface identities]\label{prop:uv}
%At $\lambda=\lambda_1$, where 
Let $ \lambda_1=\lambda_{\max}(R_T)$:
\begin{enumerate}[label=\rm(\arabic*)]
\item $u+v=\lambda_1$;
\item $a=u+\tfrac12$ and $b=v+\tfrac12$;
\item $\det S(\lambda_1)=uv+\dfrac{\lambda_1}{2}$.
\end{enumerate}
\end{proposition}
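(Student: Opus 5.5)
The plan is to read off all three identities from the eigen-equation $R_T\mathbf w=\lambda_1\mathbf w$. Item (1) comes from the row indexed by $xy$. Item (2) comes from eliminating the branch coordinates $E_0$ via the Schur step (S3) of Lemma~\ref{lem:schur}. Item (3) then follows algebraically from (1) and (2) together with $h=\tfrac12$ (Lemma~\ref{lem:g-zero}).

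\emph{Step 1 (row $xy$).} The diagonal entry of $R_T$ at $xy$ is $-(1/d_x+1/d_y)$. The other edges through $x$ contribute $\tfrac1{d_x}(S_x-w)$ and those through $y$ contribute $\tfrac1{d_y}(S_y-w)$. So the $xy$-row reads
\[
\lambda_1 w=-\Bigl(\tfrac1{d_x}+\tfrac1{d_y}\Bigr)w+\tfrac1{d_x}(S_x-w)+\tfrac1{d_y}(S_y-w).
\]
Dividing by $w>0$ gives $\lambda_1=\frac{S_x-2w}{d_xw}+\frac{S_y-2w}{d_yw}=v+u$, which is (1).

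\emph{Step 2 (elimination of $E_0$).} Let $\mathbf w_0$ be the restriction of $\mathbf w$ to $E_0$. The rows of the eigen-equation indexed by $E_0$ read
\[
(\lambda_1I-R_0)\mathbf w_0=w\Bigl(\tfrac{p}{d_x}+\tfrac{q}{d_y}\Bigr),
\]
because $(R_T)_{f,xy}=1/d_x$ for $f\ni x$ and $1/d_y$ for $f\ni y$. Since $\lambda_1I-R_0$ is invertible by Lemma~\ref{lem:lambda1-above-R0}, this gives $\mathbf w_0=wG(p/d_x+q/d_y)$. Pairing with $p$ and using $p^TGq=0$ (Lemma~\ref{lem:g-zero}), we get
\[
S_x-w=p^T\mathbf w_0=w\,\frac{p^TGp}{d_x}=w\,d_xs_x,
\qquad\text{so}\qquad s_x=\frac{S_x-w}{d_xw}.
\]
Substituting into the definition of $a$:
\[
a=\lambda_1+\tfrac1{d_x}+\tfrac12-\frac{S_x-w}{d_xw}=\lambda_1+\tfrac12-\frac{S_x-2w}{d_xw}=\lambda_1+\tfrac12-v=u+\tfrac12,
\]
where the last equality uses (1). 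The symmetric computation with $q$ gives $s_y=(S_y-w)/(d_yw)$ and hence $b=v+\tfrac12$.

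\emph{Step 3 (determinant).} By Lemma~\ref{lem:g-zero}, $h=\tfrac12$. Therefore
\[
\det S(\lambda_1)=ab-\tfrac14=\bigl(u+\tfrac12\bigr)\bigl(v+\tfrac12\bigr)-\tfrac14=uv+\tfrac{u+v}{2}=uv+\tfrac{\lambda_1}{2}.
\]

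The only delicate point is the bookkeeping in Step 2. One must confirm that the coupling between $xy$ in $T$ and the $E_0$-edges at $x$ is again $1/d_x$. This holds because the degrees of $x$ and $y$ are unchanged, and it is the same coupling that produced the $s_x$ term in $S(\lambda_1)$. One must also check that the cross term $p^TGq$ genuinely vanishes, so that $S_x$ depends only on the $x$-side block. Everything else is direct substitution.
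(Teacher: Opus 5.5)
Your proposal is correct and follows the paper's proof essentially line by line. The $xy$-row of the eigen-equation gives (1). Eliminating $E_0$ via $G=(\lambda_1I-R_0)^{-1}$ and pairing with $p$ (using $p^TGq=0$) gives $s_x=(S_x-w)/(d_xw)$, hence $a=u+\tfrac12$. Item (3) is the same algebra with $h=\tfrac12$. The only cosmetic difference is that you simplify $\tfrac1{d_x}-s_x=-v$ and then use $\lambda_1-v=u$, whereas the paper substitutes the full $xy$-row equation for $\lambda_1+\tfrac1{d_x}$; these are the same computation.
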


\begin{proof}
\textit{Identity (1).}
The eigenvalue equation $R_T\mathbf w=\lambda_1\mathbf w$ at the edge
$xy$ reads
\[
  \lambda_1 w
  =-\Bigl(\frac1{d_x}+\frac1{d_y}\Bigr)w
  +\frac{S_x-w}{d_x}+\frac{S_y-w}{d_y}.
\]
(The off-diagonal term from edges incident to $x$ contributes
$(S_x-w)/d_x$, i.e.\ the sum of their Perron weights divided by $d_x$;
similarly for $y$.) Dividing by $w$ and rearranging:
\[
  \lambda_1
  =-\frac1{d_x}-\frac1{d_y}+\frac{S_x-w}{d_xw}+\frac{S_y-w}{d_yw}
  =\frac{S_x-2w}{d_xw}+\frac{S_y-2w}{d_yw}
  =v+u.
\]

\medskip\noindent
\textit{Identity (2).}
Restricting $R_T\mathbf w=\lambda_1\mathbf w$ to $E_0$ gives
$(\lambda_1 I-R_0)\widetilde{\mathbf w}=w\,c$, where
$\widetilde{\mathbf w}:=\mathbf w|_{E_0}$ and
$c=\tfrac1{d_x}p+\tfrac1{d_y}q$. Hence
$G\,c=\widetilde{\mathbf w}/w$.

Since $g=0$ (Lemma~\ref{lem:g-zero}), the vectors $Gp$ and $Gq$ live in
different diagonal blocks of $G$, so
$p^TGq=0$ and hence
\[
  p^TGc = \frac{1}{d_x}p^TGp + \frac{1}{d_y}p^TGq = \frac{1}{d_x}p^TGp = d_x\, s_x.
\]
On the other hand, $G c = \widetilde{\mathbf w}/w$ gives
\[
  p^TGc = \frac{p^T\widetilde{\mathbf w}}{w} = \frac{S_x - w}{w}.
\]
Combining, $s_x = (S_x - w)/(d_x w)$. Therefore
\[
  a = \lambda_1 + \frac{1}{d_x} + \frac{1}{2} - \frac{S_x - w}{d_x w}.
\]
Substituting $\lambda_1 + \tfrac{1}{d_x}
= -\tfrac{1}{d_y} + \tfrac{S_x - w}{d_x w} + \tfrac{S_y - w}{d_y w}$
from the Perron equation in (1):
\[
  a = -\frac{1}{d_y} + \frac{S_y - w}{d_y w} + \frac{1}{2}
    = \frac{S_y - 2w}{d_y w} + \frac{1}{2}
    = u + \frac{1}{2}.
\]
By symmetry (exchanging $x \leftrightarrow y$), $b = v + \frac{1}{2}$.

\medskip\noindent
\textit{Identity (3).}
\[
  \det S(\lambda_1)
  =ab-h^2
  =\Bigl(u+\tfrac12\Bigr)\Bigl(v+\tfrac12\Bigr)-\tfrac14
  =uv+\frac{u+v}{2}
  =uv+\frac{\lambda_1}{2}.\qedhere
\]
\end{proof}

\subsection{The one-step criterion}

\begin{theorem}[One-step interface criterion]\label{thm:one-step}
Let $xy$ be an internal edge of $T$,
$\lambda_1=\lambda_{\max}(R_T)$, $\lambda_2=\lambda_{\max}(R_{T^{(2)}})$,
and $u,v$ as in \eqref{eq:uv-def}. Then
\[
  \lambda_2<\lambda_1\iff uv>-\tfrac{\lambda_1}{2},\qquad
  \lambda_2=\lambda_1\iff uv=-\tfrac{\lambda_1}{2},\qquad
  \lambda_2>\lambda_1\iff uv<-\tfrac{\lambda_1}{2}.
\]
\end{theorem}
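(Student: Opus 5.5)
Write $M:=\lambda_1 I-R_{T^{(2)}}$ and order the coordinates as $(e_1,e_2\,|\,E_0)$. The approach is to read off the sign of $\lambda_1-\lambda_2$ from the inertia of $M$. The lower-right block of $M$ is $\lambda_1I-R_0$, which is positive definite by Lemma~\ref{lem:lambda1-above-R0}. Its Schur complement in $M$ is exactly the $2\times2$ matrix $S(\lambda_1)$ of \eqref{eq:S-def}. To confirm this, note the entries of $R_{T^{(2)}}$ on $e_1,e_2$: the diagonals are $-(1/d_x+1/2)$ and $-(1/d_y+1/2)$, and the $e_1e_2$ coupling is $1/2$ through $v_1$. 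Moreover, $e_1$ couples to $E_0$ through $x$ via the vector $p/d_x$, and $e_2$ couples to $E_0$ through $y$ via $q/d_y$. The entries $a$, $b$, $-h$ follow, and $g=0$ by Lemma~\ref{lem:g-zero}.

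By Haynsworth inertia additivity (S2), $\In(M)=\In(\lambda_1I-R_0)+\In(S(\lambda_1))$. Since the first summand has only positive eigenvalues, the number of nonpositive eigenvalues of $M$ equals that of $S(\lambda_1)$. By Proposition~\ref{prop:uv}, $\det S(\lambda_1)=uv+\lambda_1/2$ and $\operatorname{tr}S(\lambda_1)=a+b=\lambda_1+1$. By Lemma~\ref{lem:mu-gt--1} (or, for small trees, by a direct check), $\lambda_1>-1$, so the trace is positive. Hence $S(\lambda_1)$ has at most one nonpositive eigenvalue, and its sign pattern is determined by the determinant.

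The three cases then follow. If $\det S>0$, then $S\succ0$, so $M\succ0$, i.e.\ $\lambda_1>\lambda_{\max}(R_{T^{(2)}})=\lambda_2$. If $\det S<0$, then $S$ has a negative eigenvalue, so $M$ has one, and $\lambda_2>\lambda_1$. If $\det S=0$, then $M\succeq0$ is singular, so $\lambda_1$ is an eigenvalue of $R_{T^{(2)}}$ and is at least every eigenvalue, which gives $\lambda_2=\lambda_1$. Because these three cases are exhaustive and mutually exclusive, the forward implications upgrade to the stated equivalences.

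The main obstacle is checking carefully that the Schur complement really coincides with \eqref{eq:S-def}, in particular the constant $\tfrac12$ terms coming from $d_{v_1}=2$, together with guaranteeing $a+b>0$. For the trace, I would first invoke Lemma~\ref{lem:mu-gt--1}. When $T$ has only two edges, no edge is internal, so that case does not arise. If positivity of the trace were ever in doubt, an alternative is to note that $a>0$ directly: positivity of the Perron vector gives $S_y\ge 2w$ whenever $d_y\ge2$, so $u\ge0$ and hence $a=u+\tfrac12>0$.
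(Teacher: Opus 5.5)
Your proof is correct and is essentially the paper's argument: Haynsworth inertia additivity against the positive definite block $\lambda_1 I-R_0$ reduces everything to the $2\times2$ Schur complement $S(\lambda_1)$, whose trace is $\lambda_1+1>0$ by Lemma~\ref{lem:mu-gt--1} and whose determinant is $uv+\lambda_1/2$. One correction to your fallback remark: positivity of the Perron vector gives only $S_y>w$, not $S_y\ge 2w$, so $u\ge0$ fails in general (in the paper's $(d_x,d_y)=(3,6)$ example $S_x<2w$, i.e.\ $v=-\tfrac19$); the fallback still works if you use $u>-1/d_y\ge-\tfrac12$, which gives $a=u+\tfrac12>0$.
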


\begin{proof}
By Lemma~\ref{lem:lambda1-above-R0}, one has $\lambda_1I-R_0\succ0$. Hence, by
Lemma~\ref{lem:schur}(S2), the matrix $\lambda_1I-R_{T^{(2)}}$ has the same
inertia as the $2\times2$ Schur complement $S(\lambda_1)$, up to the fixed
positive block $\lambda_1I-R_0$.

Now Proposition~\ref{prop:uv}(1) and Lemma~\ref{lem:mu-gt--1} give
\[
  \operatorname{tr}S(\lambda_1)=u+v+1=\lambda_1+1>0,
\]
while Proposition~\ref{prop:uv}(3) gives
\[
  \det S(\lambda_1)=uv+\frac{\lambda_1}{2}.
\]
For a symmetric $2\times2$ matrix with positive trace, the sign of the
determinant determines whether it is positive definite, positive semidefinite
singular, or indefinite. Therefore the sign of
\[
  uv+\frac{\lambda_1}{2}=\det S(\lambda_1)
\]
determines whether $\lambda_1I-R_{T^{(2)}}$ is positive definite, positive
semidefinite singular, or indefinite, respectively. Equivalently,
\[
  \lambda_2<\lambda_1,\qquad
  \lambda_2=\lambda_1,\qquad
  \lambda_2>\lambda_1
\]
occur according as
\[
  uv>-\frac{\lambda_1}{2},\qquad
  uv=-\frac{\lambda_1}{2},\qquad
  uv<-\frac{\lambda_1}{2}.
\]
\end{proof}

\begin{remark}[Equivalent formulations]\label{rem:equivalent}
The criterion $uv>-\lambda_1/2$ admits several equivalent restatements.
Since $u+v=\lambda_1$ and $uv=\frac14(\lambda_1^2-(u-v)^2)$, the threshold
$uv=-\lambda_1/2$ becomes $(u-v)^2=\lambda_1(\lambda_1+2)$. Moreover, if
we introduce the branch feedback functions $\Phi_A,\Phi_B$ of
Section~\ref{sec:feedback} and write
$P(\lambda):=\Phi_A(\lambda)\Phi_B(\lambda)$, one verifies that
$P(\lambda_1)-1=2\lambda_1+4uv$ and
$\Phi_A(\lambda_1)-\Phi_B(\lambda_1)=2(u-v)$. Thus:
\[
\lambda_2 < \lambda_1 
\Longleftrightarrow 
P(\lambda_1)>1.
\]
If $\lambda_1\ge0$, this is also equivalent to
\[
|\Phi_A(\lambda_1)-\Phi_B(\lambda_1)|
<2\sqrt{\lambda_1(\lambda_1+2)}.
\]

The equality and reverse inequalities admit completely analogous
reformulations.
\[
  \lambda_2=\lambda_1
  \;\Longleftrightarrow\;
  P(\lambda_1)=1,
  \qquad
  \lambda_2>\lambda_1
  \;\Longleftrightarrow\;
  P(\lambda_1)<1.
\]
If $\lambda_1\ge0$, these are also equivalent to
\[
  \bigl|\Phi_A(\lambda_1)-\Phi_B(\lambda_1)\bigr|
  =2\sqrt{\lambda_1(\lambda_1+2)},
  \qquad
  \bigl|\Phi_A(\lambda_1)-\Phi_B(\lambda_1)\bigr|
  >2\sqrt{\lambda_1(\lambda_1+2)},
\]

These reformulations are algebraically immediate once the feedback functions
are available; we record them here for completeness. The direct criterion
$uv>-\lambda_1/2$ suffices for all applications in this section.
\end{remark}

\begin{corollary}[Degree-dependent sufficient condition]\label{cor:degree}
If $d_x\ge3$, $d_y\ge3$, and
$\lambda_1>\max\!\bigl\{\frac{2}{d_x(d_x-2)},\frac{2}{d_y(d_y-2)}\bigr\}$,
then $\lambda_2<\lambda_1$.
\end{corollary}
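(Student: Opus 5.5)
The plan is to verify the criterion of Theorem~\ref{thm:one-step}, namely $uv>-\lambda_1/2$, by proving a lower bound on each interface quantity from the hypothesis on $\lambda_1$. Because $u+v=\lambda_1>0$ (Proposition~\ref{prop:uv}(1)), at most one of $u,v$ can be negative. If both are nonnegative, then $uv\ge0>-\lambda_1/2$ and we are done. So by the $x\leftrightarrow y$ symmetry of the statement it suffices to treat $v<0$, in which case $u=\lambda_1-v>\lambda_1$.

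In that case the target inequality reads $(\lambda_1-v)(-v)<\lambda_1/2$. Setting $t:=-v>0$, this is $t^2+\lambda_1 t-\lambda_1/2<0$, i.e.
\[
t<t_*(\lambda_1):=\tfrac12\Bigl(\sqrt{\lambda_1^2+2\lambda_1}-\lambda_1\Bigr).
\]
So the whole task is a lower bound on $v=(S_x-2w)/(d_xw)$ that depends only on $d_x$ and $\lambda_1$. Since $S_x-2w=\sigma-w$ with $\sigma:=\sum_{f\ni x,\,f\ne xy}\mathbf w_f$, it is enough to show that $\sigma$ is not much smaller than $w$.

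To bound $\sigma$, I will use the Perron equation of Lemma~\ref{lem:pf} at each of the $d_x-1$ other edges $f=xz$:
\[
\lambda_1\mathbf w_f=-\Bigl(\tfrac1{d_x}+\tfrac1{d_z}\Bigr)\mathbf w_f+\frac{S_x-\mathbf w_f}{d_x}+\frac{S_z-\mathbf w_f}{d_z}.
\]
Using $S_z\ge\mathbf w_f$, the last term is at least $0$, and the worst case is that $z$ is a leaf. This gives $(\lambda_1+2/d_x+1)\mathbf w_f\ge S_x/d_x$, and summing over the $d_x-1$ edges yields $\sigma\ge (d_x-1)S_x/(d_x\lambda_1+d_x+2)$. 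Writing $S_x=\sigma+w$ and solving gives $\sigma\ge w(d_x-1)/(d_x\lambda_1+3)$. Hence
\[
v\ge\frac1{d_x}\Bigl(\frac{d_x-1}{d_x\lambda_1+3}-1\Bigr)=\frac{d_x-4-d_x\lambda_1}{d_x(d_x\lambda_1+3)}.
\]
The remaining step is to check that, when $d_x\ge3$ and $\lambda_1>2/(d_x(d_x-2))$, this lower bound exceeds $-t_*(\lambda_1)$. That is a one-variable inequality in $\lambda_1$ for each fixed $d_x$. I will clear denominators and compare squares, using $\lambda_1>0$.

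The main obstacle is exactly this last comparison. The crude leaf-case estimate may not be sharp enough to reproduce the threshold $2/(d_x(d_x-2))$, and that threshold looks like it comes from a Schur-complement estimate rather than a local one. If the algebra fails, I will instead bound $s_x=p^TGp/d_x^2$ directly and aim to show $a=\lambda_1+1/d_x+\frac12-s_x$ and $b$ large enough that $ab>1/4$. For that I would compare $\lambda_1I-R_0$ on the $x$-star block with its restriction to the $(d_x-1)$-clique of edges at $x$. That restriction has off-diagonal entries $1/d_x$, so its action on $p$ is explicit, and the eigenvalue $\lambda_1+\dots-(d_x-2)/d_x$ in the direction $p$ should produce the factor $d_x(d_x-2)$ in the threshold.
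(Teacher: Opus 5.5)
Your proposal is correct, and its skeleton matches the paper's: reduce to $uv>-\lambda_1/2$ via Theorem~\ref{thm:one-step}, use $u+v=\lambda_1$, and bound the (at most one) negative interface quantity from below. The two arguments differ only in the lower bound on $v$. The step you flagged as the main obstacle is in fact immediate. The paper uses only the trivial estimate $v>-1/d_x$, which follows from $S_x>w$ (your $\sigma>0$), together with concavity of $v(\lambda_1-v)$. The hypothesis $\lambda_1>\frac{2}{d_x(d_x-2)}$ is just a rearrangement of $\frac{1}{d_x}\bigl(\lambda_1+\frac{1}{d_x}\bigr)<\frac{\lambda_1}{2}$. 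In your notation this says $t^2+\lambda_1t-\lambda_1/2<0$ at $t=1/d_x$, i.e.\ $1/d_x<t_*(\lambda_1)$. Your bound reads $v\ge\frac{1}{d_x}\bigl(\frac{d_x-1}{d_x\lambda_1+3}-1\bigr)>-\frac{1}{d_x}$, so you get $t<1/d_x<t_*(\lambda_1)$ with no clearing of denominators or squaring. The Schur-complement fallback is therefore unnecessary: the factor $d_x(d_x-2)$ comes from this rearrangement, not from a clique eigenvalue.

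What your local Perron estimate (leaf neighbours as the worst case) buys is a genuinely stronger statement than the corollary. For example, if $d_x\ge4$ then $d_x\lambda_1+4-d_x\le d_x\lambda_1$, so $t\le\tau:=\lambda_1/(4\lambda_1+3)$. The inequality $\tau^2+\lambda_1\tau<\lambda_1/2$ reduces to $2\lambda_1(4\lambda_1+4)<(4\lambda_1+3)^2$, which always holds. Hence $\lambda_2<\lambda_1$ whenever $d_x,d_y\ge4$ and $\lambda_1>0$, with no further threshold. For degree-$3$ endpoints your bound gives a threshold far below $2/3$. The paper's route is shorter and needs nothing beyond positivity of the Perron vector, but it delivers only the stated threshold.
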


\begin{proof}
By Theorem~\ref{thm:one-step} it suffices to show $uv>-\lambda_1/2$.
The Perron vector is strictly positive, so $S_x>w$ (at least one edge
besides $xy$ is incident to $x$), giving
$v=(S_x-2w)/(d_xw)>-1/d_x$. Similarly $u>-1/d_y$.

Since $u+v=\lambda_1$, the product $uv=v(\lambda_1-v)$ is a concave
quadratic in $v$, so its minimum on the interval $(-1/d_x,\,\lambda_1+1/d_y)$
is attained at an endpoint:
\[
  uv>-\max\!\Bigl\{\frac{1}{d_x}\Bigl(\lambda_1+\frac{1}{d_x}\Bigr),\;
  \frac{1}{d_y}\Bigl(\lambda_1+\frac{1}{d_y}\Bigr)\Bigr\}.
\]
The hypothesis
$\lambda_1>\frac{2}{d_x(d_x-2)}$ rearranges to
$\frac{1}{d_x}(\lambda_1+\frac{1}{d_x})<\frac{\lambda_1}{2}$, and
likewise for $d_y$; hence $uv>-\lambda_1/2$.
\end{proof}

\section{General Edge Subdivision}\label{sec:general}

We now turn to arbitrary subdivision length $\ell\ge2$. The one-step
argument of Section~\ref{sec:one-step} relied on the Schur complement being
a $2\times2$ matrix whose entries were computed from the Perron vector.
For $\ell\ge3$ the Schur complement is $\ell\times\ell$, and a direct
analysis becomes unwieldy. Instead, we compress each branch into a scalar
\emph{feedback function} and reduce the entire spectral problem to a
one-dimensional equation along the inserted chain.

\subsection{Branch decomposition and feedback functions}\label{sec:feedback}

For subdivision length $\ell \ge 3$, the $2\times2$ Schur complement 
argument of Section~\ref{sec:one-step} no longer suffices. 
We therefore develop a more general reduction that compresses each 
branch into a scalar feedback function.

Now fix an edge $xy\in E(T)$ with endpoints $x,y$ of degrees $d_x,d_y$.
Deleting $xy$ splits $T$ into two rooted branches $(A,x)$ and $(B,y)$
(branch $A$ rooted at $x$, branch $B$ at $y$), with edge sets $E(A),E(B)$, so
$E(T)=E(A)\sqcup E(B)\sqcup\{xy\}$. For $\ell\ge2$ let $T^{(\ell)}$ be obtained by
replacing $xy$ with a path
\[
  x=v_0-v_1-\cdots-v_{\ell-1}-v_\ell=y,
\]
and set $T^{(1)}:=T$. Write $e_j=v_{j-1}v_j$ and $z_j:=\mathbf w(e_j)$
($1\le j\le\ell$) for the coordinates of an edge eigenvector $\mathbf w$ along
the inserted chain. Unless stated otherwise, all one-step statements assume $xy$
is internal, i.e.\ $d_x\ge2$ and $d_y\ge2$.

Let $R_A$ (resp.\ $R_B$) be the principal submatrix of $\RT$ on the rows and
columns indexed by $E(A)$ (resp.\ $E(B)$). All degrees in these blocks are the
degrees \emph{in the original tree $T$}; e.g.\ the diagonal entry of $R_A$ at an
edge $f=\{x,x'\}$ incident to the root is $-(1/d_x+1/d_{x'})$ with $d_x$ the full
degree of $x$ in $T$. Since $xy$ is the only edge joining the two branches, $\RT$
has the bordered (arrow) form
\[
  \RT=\begin{pmatrix} R_A & 0 & c_A\\ 0 & R_B & c_B\\
  c_A^{T} & c_B^{T} & -\!\left(\tfrac1{d_x}+\tfrac1{d_y}\right)\end{pmatrix},
\]
the last coordinate being the edge $xy$. If $s_A\in\mathbb R^{E(A)}$ is the
indicator of the $d_x-1$ old edges incident to $x$ (and $s_B$ likewise at $y$),
then $c_A=\tfrac1{d_x}s_A$ and $c_B=\tfrac1{d_y}s_B$, since two edges sharing the
vertex $x$ couple through $1/d_x$.

For $\lambda\notin\sigma(R_A)\cup\sigma(R_B)$ define
\[
  \Phi_A(\lambda):=2\Big(\lambda+\tfrac1{d_x}+\tfrac12-c_A^{T}(\lambda I-R_A)^{-1}c_A\Big),
\]
and $\Phi_B$ analogously with $d_y,R_B,c_B$. The meaning of $\Phi_A$ is the
transmission relation $z_2=\Phi_A(\lambda)z_1$ recorded next; thus $\Phi_A,\Phi_B$
compress each branch into a single scalar boundary condition at the ends of the
inserted chain.

Equivalently, $\tfrac12\Phi_A(\lambda)$ is the scalar Schur complement
(Lemma~\ref{lem:schur}(S3)) of the branch block $\lambda I-R_A$ inside the
principal block of $\lambda I-R_{T^{(\ell)}}$ indexed by $E(A)$ and the first
chain edge $e_1$; this is why it encodes the branch's response. Here
$\lambda\in\mathbb R$ is the spectral parameter, evaluated at candidate eigenvalues.

Note $\Phi_A$ has poles exactly at $\sigma(R_A)$, where $(\lambda I-R_A)^{-1}$ fails to exist.

\begin{remark}
The feedback functions are defined only away from
\(\sigma(R_A)\cup\sigma(R_B)\). This restriction does not affect the
Perron eigenvalue. Indeed, because the degrees of \(x\) (resp.\ \(y\))
are the same in \(T\) and in \(T^{(\ell)}\), the principal submatrix of
\(R_{T^{(\ell)}}\) on \(E(A)\) coincides with \(R_A\), and similarly
for \(R_B\). For every \(\ell\geq 1\), these are proper principal
submatrices of \(R_{T^{(\ell)}}\). Hence, by Lemma~\ref{lem:pf}(2),
\[
\lambda_{\max}(R_A)<\lambda_{\max}(R_{T^{(\ell)}}),\qquad
\lambda_{\max}(R_B)<\lambda_{\max}(R_{T^{(\ell)}}).
\]
Since every eigenvalue in \(\sigma(R_A)\cup\sigma(R_B)\) is at most
\(\max\{\lambda_{\max}(R_A),\lambda_{\max}(R_B)\}\), which is strictly
smaller than \(\lambda_{\max}(R_{T^{(\ell)}})\), the Perron root
\(\lambda_{\max}(R_{T^{(\ell)}})\) lies outside
\(\sigma(R_A)\cup\sigma(R_B)\). Consequently, the scalar interface
equation applies to the largest eigenvalue without further
qualification.
\end{remark}

\begin{proposition}[Transmission relation and local formula]\label{prop:local-feedback}
Let $\mathbf w$ be an eigenvector of $R_{T^{(\ell)}}$ with eigenvalue
$\lambda\notin\sigma(R_A)\cup\sigma(R_B)$, and let $z_1,z_2$ be its coordinates
on the first two chain edges. Then
\[
  z_2=\Phi_A(\lambda)\,z_1,
\]
and, writing $u_1,\dots,u_{d_x-1}$ for the coordinates on the old edges incident
to the root $x$ and $m_A(\lambda)=(u_1+\cdots+u_{d_x-1})/z_1$,
\[
  \Phi_A(\lambda)=2\lambda+1+\frac{2}{d_x}-\frac{2}{d_x}m_A(\lambda).
\]
A symmetric statement holds for $\Phi_B$ at the other end of the chain.
\end{proposition}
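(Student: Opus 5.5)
The plan is to read both identities off two rows of the eigen-equation $R_{T^{(\ell)}}\mathbf w=\lambda\mathbf w$. The first is the row indexed by the first chain edge $e_1=xv_1$. The second is the block of rows indexed by $E(A)$. Throughout we take $\ell\ge2$, so that the new vertex $v_1$ exists and has degree $2$ in $T^{(\ell)}$, while $x$ keeps its degree $d_x$.

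\emph{Step 1 (local formula).} The edge $e_1$ has endpoints $x$ and $v_1$, so its diagonal entry is $-(1/d_x+1/2)$. It is adjacent through $x$ to the $d_x-1$ old edges of $A$ incident to $x$, each with entry $1/d_x$. It is adjacent through $v_1$ to $e_2$ alone, with entry $1/2$. Hence the $e_1$-row reads
\[
  \lambda z_1=-\Bigl(\tfrac1{d_x}+\tfrac12\Bigr)z_1+\tfrac1{d_x}(u_1+\cdots+u_{d_x-1})+\tfrac12 z_2 .
\]
Solving for $z_2$ gives
\[
  z_2=\Bigl(2\lambda+1+\tfrac2{d_x}\Bigr)z_1-\tfrac2{d_x}\sum_i u_i .
\]
When $z_1\neq0$, dividing by $z_1$ yields the stated expression $2\lambda+1+\tfrac2{d_x}-\tfrac2{d_x}m_A(\lambda)$. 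This is the local formula, once we know it equals $z_2/z_1$.

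\emph{Step 2 (eliminate the branch).} Next consider the rows indexed by $f\in E(A)$. The degrees of all vertices of $A$, including $x$, are unchanged, so the $E(A)$ block of $R_{T^{(\ell)}}$ is exactly $R_A$. The only edge outside $E(A)$ adjacent to an edge of $A$ is $e_1$, and it couples to the edges in the support of $s_A$ with weight $1/d_x$. Hence these rows read
\[
  (\lambda I-R_A)\mathbf w_A=c_A z_1 .
\]
Since $\lambda\notin\sigma(R_A)$, this gives $\mathbf w_A=z_1(\lambda I-R_A)^{-1}c_A$; this is exactly the elimination (S3) of Lemma~\ref{lem:schur}. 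Consequently
\[
  \sum_i u_i=s_A^{T}\mathbf w_A=d_x\,c_A^{T}\mathbf w_A=d_x z_1\,c_A^{T}(\lambda I-R_A)^{-1}c_A .
\]
Substituting this into the formula of Step 1 gives
\[
  z_2=2\Bigl(\lambda+\tfrac1{d_x}+\tfrac12-c_A^{T}(\lambda I-R_A)^{-1}c_A\Bigr)z_1=\Phi_A(\lambda)z_1 .
\]
This proves the transmission relation. It also shows $m_A(\lambda)=d_x\,c_A^{T}(\lambda I-R_A)^{-1}c_A$ whenever $z_1\ne0$, so the two expressions for $\Phi_A$ agree.

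\emph{Step 3 (degenerate case and symmetry).} The only delicate point is $z_1=0$. In that case Step 2 forces $\mathbf w_A=0$, hence $\sum_i u_i=0$, and Step 1 gives $z_2=0=\Phi_A(\lambda)z_1$. So the transmission relation still holds. The local formula is then read as valid whenever $m_A$ is defined, i.e.\ when $z_1\neq0$. The statement for $\Phi_B$ follows by the same argument at the other end of the chain, using the rows of $e_\ell$ and $E(B)$ with $v_{\ell-1}$ of degree $2$. No step is a genuine obstacle; the care lies in checking that the branch block and the coupling vector are unchanged by subdivision.
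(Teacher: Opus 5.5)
Your proof is correct and follows essentially the same route as the paper: you eliminate the branch via $(\lambda I-R_A)\mathbf w_A=z_1c_A$ and substitute into the $e_1$-row, then read off the local formula from $c_A^{T}\mathbf w_A=\frac1{d_x}\sum_i u_i$ (you only do the two steps in the opposite order). Your explicit treatment of the case $z_1=0$ is a small refinement the paper leaves implicit: there $m_A$ is undefined, yet the transmission relation still holds trivially.
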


\begin{proof}
Collect the branch-$A$ old-edge coordinates in $\mathbf u_A\in\mathbb R^{E(A)}$.
Because $A$ couples to the chain only through the root $x$, the rows of
$(\lambda I-R_{T^{(\ell)}})\mathbf w=0$ indexed by $E(A)$ read
\[
  (\lambda I-R_A)\mathbf u_A=z_1\,c_A,
  \qquad\text{hence}\qquad
  \mathbf u_A=z_1(\lambda I-R_A)^{-1}c_A
\]
by Lemma~\ref{lem:schur}(S3). The row indexed by the first chain edge
$e_1=xv_1$---with diagonal $-(\tfrac1{d_x}+\tfrac12)$, coupling $c_A^{T}$ to the
branch and $\tfrac12$ to $e_2$---reads
\[
  \lambda z_1=-\Big(\tfrac1{d_x}+\tfrac12\Big)z_1+c_A^{T}\mathbf u_A+\tfrac12 z_2.
\]
Substituting $\mathbf u_A=z_1(\lambda I-R_A)^{-1}c_A$ and solving for $z_2$,
\[
  z_2=2\Big(\lambda+\tfrac1{d_x}+\tfrac12-c_A^{T}(\lambda I-R_A)^{-1}c_A\Big)z_1
     =\Phi_A(\lambda)z_1.
\]
Finally $c_A^{T}\mathbf u_A=\tfrac1{d_x}s_A^{T}\mathbf u_A
=\tfrac1{d_x}(u_1+\cdots+u_{d_x-1})$, so the same chain-edge equation gives
$\Phi_A=z_2/z_1=2\lambda+1+\tfrac2{d_x}-\tfrac2{d_x}m_A(\lambda)$.
\end{proof}

\begin{remark}[A local degree-$4$ threshold]
If all $d_x-1$ old edges adjacent to $x$ are leaves with a common coordinate $u$,
then $m_A(\lambda)=\frac{d_x-1}{d_x\lambda+3}$, hence
$\Phi_A(0)=1+\frac{2(4-d_x)}{3d_x}$. Thus $d_x=4$ is the local threshold at which
$\Phi_A(0)=1$ in this model. We record this only as a heuristic indication of the
role of low endpoint degrees; by itself it does not imply a general one-step
theorem.
\end{remark}

\subsubsection{Chebyshev polynomials and the chain transfer matrix}

Subdividing an edge into a path of length~$\ell$ inserts $\ell-1$
degree-$2$ vertices $v_1,\dots,v_{\ell-1}$.  At each interior chain edge
$e_j=v_{j-1}v_j$ ($2\le j\le\ell-1$), both endpoints have degree~$2$,
so the eigenvalue equation $(\lambda I-R_{T^{(\ell)}})\mathbf w=0$ on that
row reads
\[
  \lambda z_j
  = -\bigl(\tfrac12+\tfrac12\bigr)z_j+\tfrac12 z_{j-1}+\tfrac12 z_{j+1},
\]
i.e.\ the constant-coefficient three-term recurrence
\begin{equation}\label{eq:chain-rec}
  z_{j+1}=2(1+\lambda)\,z_j-z_{j-1},\qquad 2\le j\le\ell-1.
\end{equation}
Setting $c:=1+\lambda$, this is precisely the defining recurrence of the
\emph{second-kind Chebyshev polynomials} $U_n$:
\[
  U_{-1}(c)=0,\qquad U_0(c)=1,\qquad U_{n+1}(c)=2c\,U_n(c)-U_{n-1}(c).
\]
Note that the recurrence~\eqref{eq:chain-rec} governs only the
\emph{interior} chain edges $e_j$ ($2\le j\le\ell-1$), where both
endpoints have degree~$2$.  The first chain edge $e_1=xv_1$ has an
endpoint~$x$ of degree~$d_x\neq2$ and obeys a different equation,
encoded by the boundary condition $z_2=\Phi_A(\lambda)z_1$
(Proposition~\ref{prop:local-feedback}); likewise, the last edge
$e_\ell$ is constrained by $\Phi_B$.  The role of the Chebyshev
polynomials is to propagate through the interior: given the pair
$(z_1,z_2)$ at the left boundary, the transfer-matrix relation
$\binom{z_{j+1}}{z_j}=M(\lambda)\binom{z_j}{z_{j-1}}$ advances the
solution step by step, and the powers $M(\lambda)^n$ are expressed in
terms of~$U_n(c)$ (Lemma~\ref{lem:chebyshev} below).  Thus the
Chebyshev polynomials serve as the natural basis for the interior
propagation, while the branch feedback functions $\Phi_A,\Phi_B$
supply the boundary data.

For later use we extend the recurrence backwards by one step: setting
$n=-1$ in $U_{n+1}=2cU_n-U_{n-1}$ gives $U_0=2cU_{-1}-U_{-2}$, i.e.\
$1=0-U_{-2}$, so
\begin{equation}\label{eq:U-minus2}
  U_{-2}(c)=-1.
\end{equation}
The trigonometric representation is
$U_n(\cos\theta)=\frac{\sin((n+1)\theta)}{\sin\theta}$; for $\lambda>0$
we have $c=1+\lambda>1$, and writing $c=\cosh t$ with
$t=\operatorname{arcosh}(1+\lambda)=\sqrt{2\lambda}+O(\lambda^{3/2})$,
the hyperbolic form becomes
\begin{equation}\label{eq:U-hyp}
  U_n(\cosh t)=\frac{\sinh((n+1)t)}{\sinh t}.
\end{equation}
This representation will be used extensively in Section~\ref{sec:ss} to
convert the Chebyshev determinant into explicit ratios of hyperbolic
functions.

The recurrence~\eqref{eq:chain-rec} is equivalently written as the
transfer-matrix relation
$\binom{z_{j+1}}{z_j}=M(\lambda)\binom{z_j}{z_{j-1}}$ with
$M(\lambda)=\begin{psmallmatrix}2(1+\lambda)&-1\\1&0\end{psmallmatrix}$.

\begin{lemma}[Transfer-matrix powers]\label{lem:chebyshev}
Write $c=1+\lambda$. Then for all $n\ge0$,
\[
  M(\lambda)^{n}=
  \begin{pmatrix}
    U_n(c) & -U_{n-1}(c)\\
    U_{n-1}(c) & -U_{n-2}(c)
  \end{pmatrix},
\]
where $U_{-2}(c)=-1$ as in \eqref{eq:U-minus2}.
The eigenvalues of $M(\lambda)$ are the roots of $t^{2}-2ct+1=0$, namely
$q^{\pm1}$ with $q=q(\lambda):=1+\lambda-\sqrt{\lambda(\lambda+2)}$, so that
$q+q^{-1}=2(1+\lambda)$ and $0<q(\lambda)<1$ for $\lambda>0$.
\end{lemma}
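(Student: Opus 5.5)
The plan is induction on $n$ for the matrix identity, followed by a direct characteristic-polynomial computation for the eigenvalues.

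\textbf{Base cases.} For $n=0$, the identity reads
\[
M(\lambda)^0=\begin{pmatrix}U_0&-U_{-1}\\U_{-1}&-U_{-2}\end{pmatrix}=\begin{pmatrix}1&0\\0&1\end{pmatrix},
\]
which holds because $U_0=1$, $U_{-1}=0$ and $U_{-2}=-1$ by \eqref{eq:U-minus2}. For $n=1$, the right-hand side is
\[
\begin{pmatrix}U_1&-U_0\\U_0&-U_{-1}\end{pmatrix}=\begin{pmatrix}2c&-1\\1&0\end{pmatrix}=M(\lambda),
\]
as required.

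\textbf{Inductive step.} Assume the formula holds for $M^n$ and compute $M^{n+1}=M\cdot M^n$ by multiplying the rows of $M=\begin{psmallmatrix}2c&-1\\1&0\end{psmallmatrix}$ into the columns of $M^n$.
\begin{itemize}[label=--]
\item The first row gives entries $2cU_n-U_{n-1}$ and $-(2cU_{n-1}-U_{n-2})$.
\item The second row simply copies the first row of $M^n$, giving $U_n$ and $-U_{n-1}$.
\end{itemize}
The three-term recurrence $U_{k+1}=2cU_k-U_{k-1}$, applied with $k=n$ and $k=n-1$, turns these into $U_{n+1}$ and $-U_n$. This is exactly the claimed form with $n$ replaced by $n+1$.

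The only delicate point is the range of indices. At $n=0$ the step uses the recurrence at $k=-1$, which is precisely the backward extension that defined $U_{-2}=-1$. The recurrence therefore holds for all $k\ge -1$, and the induction closes without a separate case.

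\textbf{Eigenvalues.} The characteristic polynomial is
\[
\det(tI-M)=t(t-2c)+1=t^2-2ct+1,
\]
with roots $c\pm\sqrt{c^2-1}$. Since $c^2-1=\lambda(\lambda+2)$, the smaller root is $q=1+\lambda-\sqrt{\lambda(\lambda+2)}$. The product of the roots is $1$, so the other root is $q^{-1}$, and the sum of the roots gives $q+q^{-1}=2c=2(1+\lambda)$.

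\textbf{Bounds on $q$ for $\lambda>0$.} Here $\lambda(\lambda+2)>0$, so $q<1+\lambda$. Also $\lambda(\lambda+2)<(1+\lambda)^2$, so $\sqrt{\lambda(\lambda+2)}<1+\lambda$ and hence $q>0$. The two roots are distinct and positive with product $1$, so the smaller one satisfies $q<1$.
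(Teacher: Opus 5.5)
Your proposal is correct and follows essentially the same route as the paper's proof. You argue by induction via $M^{n+1}=M\cdot M^n$ and the Chebyshev recurrence, with the backward extension $U_{-2}=-1$ covering the low-index case, and then use the characteristic polynomial $t^2-2ct+1$, whose reciprocal roots give $q=c-\sqrt{c^2-1}\in(0,1)$ for $\lambda>0$.
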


\begin{proof}
The cases $n=0,1$ are direct (using $U_{-2}=-1$, $U_{-1}=0$, $U_0=1$,
$U_1=2c$).  Assuming the formula for~$n$,
\[
  M(\lambda)^{n+1}
  =\begin{pmatrix} 2c & -1\\ 1 & 0\end{pmatrix}
   \begin{pmatrix} U_n & -U_{n-1}\\ U_{n-1} & -U_{n-2}\end{pmatrix}
  =\begin{pmatrix} 2cU_n-U_{n-1} & -(2cU_{n-1}-U_{n-2})\\ U_n & -U_{n-1}\end{pmatrix},
\]
which is the asserted matrix for $n+1$ by the recurrence $U_{k+1}=2cU_k-U_{k-1}$.
The characteristic polynomial of $M(\lambda)$ is $t^{2}-2ct+1$, with reciprocal
roots; for $\lambda>0$, $c>1$ and the smaller root is
$q=c-\sqrt{c^2-1}\in(0,1)$, where $c^2-1=\lambda(\lambda+2)$.
\end{proof}

The quantity $q(\lambda)$ is the decaying mode of the chain
recurrence~\eqref{eq:chain-rec} and therefore controls all long-chain limits.

\begin{remark}
In the hyperbolic parametrization $c=\cosh t$, the two roots of
$t^2-2ct+1=0$ are $e^{\pm t}$, so $q=e^{-t}$ and $q^{-1}=e^{t}$.
Every chain quantity in Section~\ref{sec:ss} reduces to ratios of
$\cosh$ and $\sinh$ via \eqref{eq:U-hyp}.
\end{remark}

\subsection{The scalar transmission equation}\label{subsec:scalar-transmission}
We now return to arbitrary subdivision length and recover the interface formalism
from the same branch data.

\begin{proposition}[Scalar equation for general subdivision]\label{prop:general}
Let $\lambda\notin\sigma(R_A)\cup\sigma(R_B)$. Any eigenvector of $R_{T^{(\ell)}}$
with chain coordinates $z_1,\dots,z_\ell$ satisfies $z_2=\Phi_A(\lambda)z_1$,
$z_{\ell-1}=\Phi_B(\lambda)z_\ell$, and $z_{j-1}+z_{j+1}=2(1+\lambda)z_j$
($2\le j\le\ell-1$). For the unsubdivided tree,
\[
  \lambda\in\sigma(R_{T^{(1)}})=\sigma(R_T)\iff
  D_1(\lambda):=2(1+\lambda)-\Phi_A-\Phi_B=0.
\]
For every $\ell\ge2$, with $M(\lambda)=\begin{psmallmatrix}2(1+\lambda)&-1\\1&0\end{psmallmatrix}$,
\[
  \lambda\in\sigma(R_{T^{(\ell)}})\iff
  D_\ell(\lambda):=\begin{pmatrix}-\Phi_B&1\end{pmatrix}M(\lambda)^{\ell-2}\begin{pmatrix}\Phi_A\\1\end{pmatrix}=0,
\]
and, with the second-kind Chebyshev polynomials $U_n$ (Lemma~\ref{lem:chebyshev})
under the conventions $U_{-1}=0$, $U_{-2}=-1$,
\[
  D_\ell(\lambda)=-\Phi_A\Phi_B\,U_{\ell-2}(1+\lambda)
  +(\Phi_A+\Phi_B)\,U_{\ell-3}(1+\lambda)-U_{\ell-4}(1+\lambda).
\]
In particular, the first two cases read
\[
  D_2(\lambda)=1-\Phi_A\Phi_B,\qquad
  D_3(\lambda)=\Phi_A+\Phi_B-2(1+\lambda)\Phi_A\Phi_B.
\]
\end{proposition}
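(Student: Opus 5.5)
The plan is to eliminate both branches with Proposition~\ref{prop:local-feedback} and so reduce the eigenproblem of $R_{T^{(\ell)}}$ to the chain coordinates $z_1,\dots,z_\ell$. Because $\lambda\notin\sigma(R_A)\cup\sigma(R_B)$, the rows indexed by $E(A)$ and $E(B)$ give $\mathbf u_A=z_1(\lambda I-R_A)^{-1}c_A$ and $\mathbf u_B=z_\ell(\lambda I-R_B)^{-1}c_B$ by Lemma~\ref{lem:schur}(S3). Substituting into the row of $e_1$ gives $z_2=\Phi_A z_1$. The mirror computation at $e_\ell=v_{\ell-1}y$, whose diagonal entry is $-(\tfrac12+\tfrac1{d_y})$, gives $z_{\ell-1}=\Phi_B z_\ell$. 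The interior rows give \eqref{eq:chain-rec}.

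Conversely, take any chain vector satisfying these three relations and define $\mathbf u_A,\mathbf u_B$ by the formulas above. This yields a vector satisfying every row of $(\lambda I-R_{T^{(\ell)}})\mathbf w=0$, and it is nonzero iff $z\neq0$; indeed $z=0$ forces $\mathbf u_A=\mathbf u_B=0$. So $\lambda\in\sigma$ iff the linear system on $z$ has a nontrivial solution. The same bookkeeping also shows that the eigenspace is isomorphic to the solution space of the chain system.

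For $\ell=1$ the single edge $xy$ has diagonal $-(1/d_x+1/d_y)$. Its row, after the branch substitution, reads
\[
\lambda=-\tfrac1{d_x}-\tfrac1{d_y}+c_A^T(\lambda I-R_A)^{-1}c_A+c_B^T(\lambda I-R_B)^{-1}c_B .
\]
Writing the Schur terms as $\lambda+\tfrac1{d_x}+\tfrac12-\tfrac12\Phi_A$ and likewise for $B$, this becomes $0=\lambda+1-\tfrac12(\Phi_A+\Phi_B)$, i.e.\ $D_1=0$. Here $z_1\ne0$ can be normalized. This is also the determinant identity (S1) for a $1\times1$ Schur complement.

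For $\ell\ge2$, a nontrivial solution has $z_1\neq0$: if $z_1=0$, then $z_2=0$, and the recurrence kills everything. So normalize $z_1=1$, which gives $(z_2,z_1)^T=(\Phi_A,1)^T$. By Lemma~\ref{lem:chebyshev}, $(z_\ell,z_{\ell-1})^T=M^{\ell-2}(\Phi_A,1)^T$. The recurrence is used only for $2\le j\le\ell-1$, which is exactly $\ell-2$ steps. The remaining condition $z_{\ell-1}-\Phi_B z_\ell=0$ is $(-\Phi_B,1)M^{\ell-2}(\Phi_A,1)^T=0$, which is $D_\ell$.

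Expanding $M^{\ell-2}$ via Lemma~\ref{lem:chebyshev} with $n=\ell-2$ gives the entries $U_{\ell-2},-U_{\ell-3},U_{\ell-3},-U_{\ell-4}$. Then
\[
D_\ell=-\Phi_B(\Phi_AU_{\ell-2}-U_{\ell-3})+(\Phi_AU_{\ell-3}-U_{\ell-4}),
\]
which is the stated formula. For $\ell=2$ we have $M^0=I$ and $D_2=1-\Phi_A\Phi_B$, consistent with the formula under $U_0=1$, $U_{-1}=0$, $U_{-2}=-1$. For $\ell=3$, substituting $U_1=2(1+\lambda)$ gives $D_3$.

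The main obstacle is the careful handling of the boundary row $e_\ell$ and of the degenerate case $\ell=2$, where $e_1$ and $e_\ell$ are adjacent. In that case the "interior" recurrence is vacuous, and both boundary relations involve the same two coordinates. One must check that each boundary row uses the coupling $\tfrac12$ through $v_1$ correctly and that no interior equation is double-counted. The nontriviality argument ($z_1\neq0$, and by symmetry $z_\ell\neq0$) also needs stating so that the normalization is legitimate.
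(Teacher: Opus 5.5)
Your proposal is correct and follows the same route as the paper: eliminate the branches via Proposition~\ref{prop:local-feedback}, propagate $(z_2,z_1)^T=z_1(\Phi_A,1)^T$ through $M(\lambda)^{\ell-2}$, impose the right boundary relation $z_{\ell-1}=\Phi_B z_\ell$, expand with Lemma~\ref{lem:chebyshev}, and treat $\ell=1$ separately. You also spell out two steps the paper's proof leaves implicit: the reverse implication, by rebuilding $\mathbf u_A,\mathbf u_B$ from a chain solution, and the check that a nontrivial eigenvector has $z_1\neq0$, which justifies the normalization.
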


\begin{proof}
After Schur elimination of the old-edge blocks (Proposition~\ref{prop:local-feedback}),
the chain variables satisfy $\binom{z_{j+1}}{z_j}=M(\lambda)\binom{z_j}{z_{j-1}}$,
so $\binom{z_\ell}{z_{\ell-1}}=z_1M(\lambda)^{\ell-2}\binom{\Phi_A}{1}$. The right
boundary condition $z_{\ell-1}=\Phi_B z_\ell$ is
$\begin{psmallmatrix}-\Phi_B&1\end{psmallmatrix}\binom{z_\ell}{z_{\ell-1}}=0$,
i.e.\ $D_\ell(\lambda)=0$. Expanding $M(\lambda)^{\ell-2}$ via
Lemma~\ref{lem:chebyshev} gives the Chebyshev form. The conventions
$U_{-1}=0$ and $U_{-2}=-1$ make this expression valid down to $\ell=2$:
for $\ell=2$ it returns $-\Phi_A\Phi_B U_0+(\Phi_A+\Phi_B)U_{-1}-U_{-2}
=1-\Phi_A\Phi_B$, and for $\ell=3$ it returns
$-\Phi_A\Phi_B U_1+(\Phi_A+\Phi_B)U_0-U_{-1}
=\Phi_A+\Phi_B-2(1+\lambda)\Phi_A\Phi_B$,
matching the displayed special cases. The case $\ell=1$ is treated separately
since the inserted chain then consists of the single edge $xy$ with no interior
degree-$2$ vertex; the boundary relations $z_2=\Phi_Az_1$ and
$z_{\ell-1}=\Phi_Bz_\ell$ collapse to the single chain-edge equation, which
reads $2(1+\lambda)=\Phi_A+\Phi_B$.
\end{proof}

\subsection{Long-chain limit points}
\begin{corollary}[Long-chain limit equation]\label{cor:limit}
Suppose \(\lambda_\ell:=\lambda_{\max}(R_{T^{(\ell)}})\) eventually lies in a
compact interval \([\lambda_-,\lambda_+]\subset(0,\infty)\). Then every
accumulation point \(\lambda_\ast\) of \(\{\lambda_\ell\}\) with
$\lambda_\ast\notin \sigma(R_A)\cup\sigma(R_B)$
 satisfies
\[
  (\Phi_A(\lambda_\ast)-q(\lambda_\ast))(\Phi_B(\lambda_\ast)-q(\lambda_\ast))=0,
\]
where $q(\lambda)=1+\lambda-\sqrt{\lambda(\lambda+2)}$.
\end{corollary}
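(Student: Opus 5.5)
Since $\lambda_\ell\ge\lambda_->0$, the plan is to write $c=1+\lambda=\cosh t$ with $t>0$ and put $q=e^{-t}$, so that \eqref{eq:U-hyp} gives $U_n(c)=(q^{-(n+1)}-q^{n+1})/(q^{-1}-q)$. Substituting this into the Chebyshev form of $D_\ell$ from Proposition~\ref{prop:general} and collecting the $q^{-\ell}$ and $q^{\ell}$ terms, multiplied by $(q^{-1}-q)$, should give
\[
(q^{-1}-q)\,D_\ell(\lambda)=-q^{-(\ell-1)}\,(\Phi_A-q)(\Phi_B-q)+q^{\ell-1}\,(\Phi_A-q^{-1})(\Phi_B-q^{-1}).
\]
This is checked by factoring $-\Phi_A\Phi_B x^{2}+(\Phi_A+\Phi_B)x-1=-(\Phi_A x-1)(\Phi_B x-1)$ at $x=q^{-1}$ and $x=q$; a quick test at $\ell=2$ against $D_2=1-\Phi_A\Phi_B$ confirms the normalization. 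Multiplying by $-q^{\ell-1}$, the equation $D_\ell(\lambda)=0$ becomes
\[
F(\lambda)\;:=\;(\Phi_A-q)(\Phi_B-q)\;=\;q^{2(\ell-1)}\,(\Phi_A-q^{-1})(\Phi_B-q^{-1})\;=:\;q(\lambda)^{2(\ell-1)}G(\lambda).
\]

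For the limit, I would take a subsequence $\lambda_{\ell_k}\to\lambda_\ast\in[\lambda_-,\lambda_+]$. By the remark following the definition of $\Phi_A,\Phi_B$, each $\lambda_{\ell_k}$ lies outside $\sigma(R_A)\cup\sigma(R_B)$ and is a root of $D_{\ell_k}$, so $F(\lambda_{\ell_k})=q(\lambda_{\ell_k})^{2(\ell_k-1)}G(\lambda_{\ell_k})$. The hypothesis $\lambda_\ast\notin\sigma(R_A)\cup\sigma(R_B)$ is used to pick a compact neighbourhood $K\subset(0,\infty)$ of $\lambda_\ast$ disjoint from these finitely many poles. On $K$, $\Phi_A$, $\Phi_B$ and $q$ are continuous, hence $G$ is bounded, and $q\le q(\min K)<1$ because $q$ is decreasing on $(0,\infty)$. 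Therefore $|F(\lambda_{\ell_k})|\le \sup_K|G|\cdot q(\min K)^{2(\ell_k-1)}\to0$, and continuity of $F$ at $\lambda_\ast$ gives $F(\lambda_\ast)=0$, which is the claimed factorization.

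The main obstacle is this uniformity step. The limit point has to stay away from the poles so that $G$ is uniformly bounded near $\lambda_\ast$, and the positive lower bound $\lambda_->0$ is needed for $q$ to be uniformly below $1$. The explicit hypothesis on $\lambda_\ast$, together with positivity of the interval, provides both. The algebraic identity itself is routine, but I would verify the signs carefully, and the $\ell=2$ check above does that.
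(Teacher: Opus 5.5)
Your proposal is correct and follows the paper's proof: after multiplying by $q^{\ell-1}$, your identity is exactly the paper's $(1-q^2)q^{\ell-2}D_\ell=-(\Phi_A-q)(\Phi_B-q)+q^{2\ell-4}(1-q\Phi_A)(1-q\Phi_B)$, and the limiting argument is the same. Your choice of a pole-free compact neighbourhood $K$, which bounds $G$ uniformly and keeps $q\le q(\min K)<1$, simply makes precise a boundedness step that the paper leaves implicit.
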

\begin{proof}
Substituting $U_n(c)=(q^{-(n+1)}-q^{n+1})/(q^{-1}-q)$ into
Proposition~\ref{prop:general} and multiplying by $(1-q^2)q^{\ell-2}$:
\[
  (1-q^2)q^{\ell-2}D_\ell(\lambda)
  =-(\Phi_A-q)(\Phi_B-q)
  +q^{2\ell-4}(1-q\Phi_A)(1-q\Phi_B).
\]
If $\lambda_{\ell_j}\to\lambda_\ast$ with $\lambda_\ast>0$, then
$0<q(\lambda_{\ell_j})<1$, so $q(\lambda_{\ell_j})^{2\ell_j-4}\to0$; since $D_{\ell_j}(\lambda_{\ell_j})=0$,
the first term must vanish in the limit.
\end{proof}

\begin{remark}[On convergence]\label{rem:convergence}
Corollary~\ref{cor:limit} identifies the \emph{possible} limits but does
not assert convergence of the full sequence. We distinguish two cases.

\begin{enumerate}[label=\rm(\arabic*)]
\item \textbf{Symmetric case} ($\Phi_A\equiv\Phi_B$). 
The Perron eigenvector of $R_{T^{(\ell)}}$ is symmetric about the chain 
midpoint (by uniqueness of the positive eigenvector and the symmetry of 
the tree). Hence $z_2/z_1 = z_{\ell-1}/z_\ell$, and both equal the explicit 
function
\[
  R_\ell(\lambda)
  = \frac{\cosh\bigl(\frac{\ell-3}{2}\,t\bigr)}
         {\cosh\bigl(\frac{\ell-1}{2}\,t\bigr)},
  \qquad t = \operatorname{arcosh}(1+\lambda),
\]
which satisfies $R_{\ell+1}(\lambda) < R_\ell(\lambda)$ pointwise for
$\lambda>0$ and $R_\ell(\lambda) \downarrow q(\lambda)$. Since
$\Phi_A(\lambda)$ is strictly increasing, the unique positive root
$\lambda_\ell$ of $R_\ell(\lambda)=\Phi_A(\lambda)$ is strictly
decreasing and converges to the unique positive solution of
$\Phi_A(\lambda)=q(\lambda)$.

\item \textbf{Asymmetric case} ($\Phi_A\not\equiv\Phi_B$). 
Let $\lambda_A$ and $\lambda_B$ denote the positive roots of
$\Phi_A(\lambda)=q(\lambda)$ and $\Phi_B(\lambda)=q(\lambda)$,
respectively, assuming they exist and are unique. 
For large $\ell$, the equation $D_\ell(\lambda)=0$ has roots near 
$\lambda_A$ and $\lambda_B$ (one near each), and $\lambda_\ell$, 
being the \emph{largest} eigenvalue of $R_{T^{(\ell)}}$, tends to 
$\max\{\lambda_A, \lambda_B\}$. 
We \emph{conjecture} that this holds in general; numerical evidence 
(see the example in Section~\ref{subsec:asymmetric-example}) supports 
this claim. A rigorous proof in the asymmetric setting requires 
controlling the $q^{2\ell-4}$ correction term in the derivation of 
Corollary~\ref{cor:limit} and is left for future work.
\end{enumerate}
\end{remark}

\section{An Application}\label{sec:ss}
\subsection{Symmetric Double Stars}
We conclude with a family for which the interface equation is explicit. Let
$T_{s,\ell}$ be obtained by subdividing the central edge of the symmetric double
star $S_{s,s}$ into a path of length $\ell$; the two endpoints have degree $s+1$
and the interior chain vertices have degree $2$.

\begin{proposition}[Scalar equation for the subdivided double star]\label{prop:ss}
For $T_{s,\ell}$, $\lambda=\lambda_{\max}(R_{T_{s,\ell}})$ satisfies
$R_\ell(\lambda)=\beta_s(\lambda)$, where
\[
  \beta_s(\lambda):=2\lambda+1+\frac{2}{s+1}-\frac{2s}{(s+1)((s+1)\lambda+3)},
  \qquad
  R_\ell(\lambda):=\frac{\cosh(\frac{\ell-3}{2}t)}{\cosh(\frac{\ell-1}{2}t)},\ \ t=\operatorname{arcosh}(1+\lambda).
\]
For $s>3$ this has a unique positive root; for $s=3$ it has the root $\lambda=0$
and no positive root.
\end{proposition}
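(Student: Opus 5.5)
The plan is to work directly with the Perron vector, using the symmetry of $T_{s,\ell}$ and the transmission relation of Proposition~\ref{prop:local-feedback}. For the existence claims I will not try to locate $\lambda_{\max}$ a priori. Instead I will build an explicit positive eigenvector at the root of $R_\ell=\beta_s$ and then invoke the uniqueness part of Lemma~\ref{lem:pf}(1).

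\emph{Step 1: the branch feedback is $\beta_s$.} Branch $A$ consists of $s$ pendant edges at $x$, and $d_x=s+1$. On each leaf edge the diagonal entry is $-(1+\tfrac1{s+1})$. Each leaf edge couples with weight $\tfrac1{s+1}$ to the other $s-1$ leaves and to $z_1$. For $\lambda\notin\sigma(R_A)$ the solution $\mathbf u_A=z_1(\lambda I-R_A)^{-1}c_A$ is unique, and it is invariant under permutations of the leaves, so all leaf coordinates equal a common value $u$. The leaf equation reads
\[
\lambda u=-\bigl(1+\tfrac1{s+1}\bigr)u+\tfrac{s-1}{s+1}u+\tfrac1{s+1}z_1,
\]
which gives $u=z_1/((s+1)\lambda+3)$ and $m_A=s/((s+1)\lambda+3)$. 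Proposition~\ref{prop:local-feedback} then yields $\Phi_A=\Phi_B=\beta_s$. The poles of $\beta_s$ lie at $\lambda\le -3/(s+1)<0$, so they are harmless for everything below.

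\emph{Step 2: the chain ratio is $R_\ell$.} Set $c=\cosh t$. The recurrence \eqref{eq:chain-rec} has the solution
\[
z_j=\cosh\bigl((j-\tfrac{\ell+1}{2})t\bigr),
\]
which is symmetric about the midpoint of the chain. It satisfies $z_2/z_1=z_{\ell-1}/z_\ell=R_\ell(\lambda)$, and for $\ell=2$ the same formula holds because $\cosh$ is even. For $\lambda=0$ we take $t=0$, so $z_j\equiv1$ and $R_\ell(0)=1$.

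Now take a root $\lambda\ge0$ of $R_\ell=\beta_s$. Assemble the vector whose chain coordinates are the $z_j$ and whose leaf coordinates are $z_1/((s+1)\lambda+3)$ at the $x$ end, and $z_\ell/((s+1)\lambda+3)$ at the $y$ end. By Steps 1 and 2 this vector satisfies every row of the eigen-equation: the leaf rows, the two end-chain rows (these are exactly $z_2=\beta_s z_1$ and $z_{\ell-1}=\beta_s z_\ell$), and the interior rows. All of its entries are strictly positive. Hence, by Lemma~\ref{lem:pf}(1), $\lambda=\lambda_{\max}(R_{T_{s,\ell}})$. This same argument shows conversely that $\lambda_{\max}$ satisfies the equation: the Perron vector is symmetric, so Remark~\ref{rem:convergence}(1) applies.

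\emph{Step 3: counting roots.}
\begin{itemize}
\item $\beta_s$ is strictly increasing, since $\beta_s'=2+2s/((s+1)\lambda+3)^2>0$, and $\beta_s\to\infty$ as $\lambda\to\infty$.
\item For $\lambda>0$ we have $0<R_\ell(\lambda)<1$. Moreover $R_\ell$ is nonincreasing in $t$, because $\cosh(at)/\cosh(bt)$ with $|a|<b$ is decreasing for $t>0$.
\item At the left endpoint, $\beta_s(0)=1+\frac{2(3-s)}{3(s+1)}$.
\end{itemize}
If $s>3$, then $\beta_s(0)<1=R_\ell(0)$. The difference $\beta_s-R_\ell$ is strictly increasing, negative at $0$, and tends to $+\infty$, so it has exactly one positive zero, and by Step 2 that zero is $\lambda_{\max}$. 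If $s=3$, then $\beta_3(0)=1=R_\ell(0)$, so $\lambda=0$ is a root; it is $\lambda_{\max}$, with the constant-chain positive vector. For $\lambda>0$ we get $\beta_3(\lambda)>1>R_\ell(\lambda)$, so there is no positive root.

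The main obstacle I expect is monotonicity of $R_\ell$ in $\lambda$. The cleanest route is the identity
\[
\tfrac{d}{dt}\log\frac{\cosh at}{\cosh bt}=a\tanh(at)-b\tanh(bt)<0,
\]
valid for $|a|<b$ and $t>0$. Actually, only $R_\ell<1$ on $(0,\infty)$ is needed, because $\beta_s$ is increasing. This follows directly from $|\ell-3|<\ell-1$, so the uniqueness argument should not depend on the finer monotonicity.
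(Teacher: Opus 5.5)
Your proof is correct. It differs from the paper's in the direction of the key identification step.

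\textbf{The paper's route.} The paper starts from the Perron vector and uses its invariance under the involution that swaps the stars and reverses the chain. This gives equal leaf weights and a symmetric chain, so $\lambda_{\max}$ must satisfy $R_\ell=\beta_s$; this is a necessity argument. The root count then comes from $R_\ell$ decreasing and $\beta_s$ increasing.

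\textbf{Your route.} You take an arbitrary root $\lambda\ge0$, build from it an explicit strictly positive eigenvector, and conclude $\lambda=\lambda_{\max}$ from the uniqueness clause of Lemma~\ref{lem:pf}(1); this is a sufficiency argument.

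\textbf{What each buys.} Your route never needs the sign of $\lambda_{\max}$ in advance. The paper's derivation writes $1+\lambda=\cosh t$, which presupposes $\lambda\ge0$. Its downstream conclusions also tacitly need $\lambda_{\max}\ge0$: that $\lambda_{\max}=0$ when $s=3$, and that $\lambda_{\max}$ is the positive root when $s>3$. Your positive vector at the root supplies exactly this. Your route also yields uniqueness essentially for free, since any two nonnegative roots both equal $\lambda_{\max}$. For $s\ge3$ the appeal to Remark~\ref{rem:convergence}(1) for the converse is likewise unnecessary: existence of a nonnegative root plus your construction already shows $\lambda_{\max}$ solves the equation. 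The paper's necessity argument, for its part, fits the general feedback-function framework and applies to the Perron root without first locating a root.

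\textbf{Two small slips.}
\begin{enumerate}
\item $R_2\equiv1$, so the claims $0<R_\ell(\lambda)<1$ and $|\ell-3|<\ell-1$ fail at $\ell=2$. Only $R_\ell\le1$ holds in general. That is all your $s=3$ argument needs, since $\beta_3(\lambda)>1\ge R_\ell(\lambda)$.
\item Your closing remark says only $R_\ell<1$ is needed ``because $\beta_s$ is increasing''. That is not a valid reason when $s>3$: an increasing $\beta_s$ that starts below $1$ could in principle cross a non-monotone $R_\ell\le1$ several times. What rules this out is either the monotonicity of $R_\ell$, which you do prove correctly via $a\tanh(at)-b\tanh(bt)<0$ for $|a|<b$, or the Perron uniqueness coming from your own Step~2.
\end{enumerate}
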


\begin{proof}
Let $\mathbf w>0$ be the Perron vector. By the symmetry exchanging the two stars
and reversing the chain, $\mathbf w$ is invariant under this involution (the
Perron eigenspace is one-dimensional by Lemma~\ref{lem:pf}); hence all $s$ leaf
edges at a fixed endpoint share a value $u$, and $z_j=z_{\ell+1-j}$. Each leaf
edge (incident to a leaf and to the degree-$(s+1)$ center) satisfies
$\lambda u=-(1+\tfrac1{s+1})u+\tfrac{(s-1)u+z_1}{s+1}$, i.e.\ $((s+1)\lambda+3)u=z_1$,
so $m_A(\lambda)=su/z_1=s/((s+1)\lambda+3)$. By Proposition~\ref{prop:local-feedback}
with $d_x=s+1$, $\Phi_A(\lambda)=\beta_s(\lambda)$, and by symmetry
$\Phi_B=\beta_s$. Along the chain, $z_{j-1}+z_{j+1}=2(1+\lambda)z_j$; for
$\lambda>0$, writing $1+\lambda=\cosh t$, the symmetric solutions are
$z_j=C\cosh((j-\tfrac{\ell+1}2)t)$, so $z_2/z_1=R_\ell(\lambda)$. The left
feedback condition $z_2=\Phi_A z_1=\beta_s(\lambda)z_1$ then gives
$R_\ell(\lambda)=\beta_s(\lambda)$ (at $\lambda=0$, by continuity, $R_\ell(0)=1$).

For uniqueness: $R_2\equiv1$, and for $\ell\ge3$, $R_\ell$ is strictly decreasing
in $\lambda$; meanwhile $\beta_s'(\lambda)=2+\tfrac{2s}{((s+1)\lambda+3)^2}>0$, so
$\beta_s$ is strictly increasing, with $R_\ell(0)=1$ and
$\beta_s(0)=1-\tfrac{2(s-3)}{3(s+1)}$. If $s>3$, then $\beta_s(0)<1=R_\ell(0)$,
while $\beta_s\to\infty$ and $R_\ell\to0$, giving exactly one positive root. If
$s=3$, then $\beta_3(0)=1$ and $\beta_3(\lambda)>1\ge R_\ell(\lambda)$ for
$\lambda>0$, so $\lambda=0$ is the only nonnegative root.
\end{proof}

\begin{corollary}[Threshold and subdivision monotonicity]\label{cor:ss}
\begin{enumerate}[label=\rm(\arabic*)]
\item If $s=3$, then $\lambda_{\max}(R_{T_{3,\ell}})=0$ for all $\ell\ge2$.
\item If $s>3$, then $\lambda_{s,\ell+1}<\lambda_{s,\ell}$ for $\ell\ge2$, where
$\lambda_{s,\ell}:=\lambda_{\max}(R_{T_{s,\ell}})$, and
$\lambda_{s,\ell}\downarrow\lambda_{s,\infty}>0$ as $\ell\to\infty$,
where $\lambda_{s,\infty}$ is the unique positive solution of
$q(\lambda)=\beta_s(\lambda)$, with
$q(\lambda)=1+\lambda-\sqrt{\lambda(\lambda+2)}$ the decaying root of
the chain recurrence (equivalently, $e^{-t}=\beta_s(\lambda)$ with
$t=\operatorname{arcosh}(1+\lambda)$).
\end{enumerate}
\end{corollary}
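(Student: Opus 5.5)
The plan is to avoid working directly with the spectral equation for $\lambda_{\max}$. Instead I will construct an explicit positive eigenvector and then invoke the uniqueness clause of Lemma~\ref{lem:pf}(1): an eigenvalue with a strictly positive eigenvector must be $\lambda_{\max}$. This sidesteps the one gap in Proposition~\ref{prop:ss}, namely that it does not exclude a priori that $\lambda_{\max}$ is negative.

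\emph{Part (1), $s=3$.} Take $\lambda=0$ and set $z_j=1$ for all chain edges and $u=z_1/3=1/3$ on each of the six leaf edges. I will check the three types of rows of $R_{T_{3,\ell}}\mathbf w=0$:
\begin{itemize}
\item Interior chain rows hold because the recurrence \eqref{eq:chain-rec} with $1+\lambda=1$ is satisfied by constants.
\item Leaf rows hold because $((s+1)\lambda+3)u=z_1$ is true at $\lambda=0$, $s=3$.
\item The rows for $e_1$ and $e_\ell$ hold because, by Proposition~\ref{prop:local-feedback}, they are equivalent to $z_2=\Phi_A(0)z_1$. Here $\Phi_A(0)=\beta_3(0)=1$.
\end{itemize}
So $0$ is an eigenvalue with a positive eigenvector, and Lemma~\ref{lem:pf}(1) gives $\lambda_{\max}(R_{T_{3,\ell}})=0$.

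\emph{Part (2), $s>3$: identification.} Let $\lambda^*>0$ be the unique positive root of $R_\ell=\beta_s$ from Proposition~\ref{prop:ss}, and put $t=\operatorname{arcosh}(1+\lambda^*)$. I define
\[
z_j=\cosh\bigl((j-\tfrac{\ell+1}{2})t\bigr)>0,\qquad u=\frac{z_1}{(s+1)\lambda^*+3}>0 .
\]
The same three checks go through: the chain recurrence holds by construction, the leaf rows by the choice of $u$, and the end rows reduce to $z_2/z_1=R_\ell(\lambda^*)=\beta_s(\lambda^*)=\Phi_A(\lambda^*)$. At the other end the corresponding condition holds by the symmetry $z_j=z_{\ell+1-j}$. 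Hence $\lambda_{s,\ell}=\lambda^*>0$ by Lemma~\ref{lem:pf}(1).

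\emph{Part (2): monotonicity.} I first record the identity
\[
\frac{\cosh(a-t)}{\cosh a}=\cosh t-\sinh t\,\tanh a .
\]
For $t>0$ this is strictly decreasing in $a$. Taking $a=\tfrac{\ell-1}{2}t$ shows $R_{\ell+1}(\lambda)<R_\ell(\lambda)$ for every $\lambda>0$. It also shows $R_\ell(\lambda)\downarrow \cosh t-\sinh t=e^{-t}=q(\lambda)$, with $R_\ell>q$ throughout.

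Next, $\beta_s-R_{\ell+1}$ is continuous and strictly increasing, since $\beta_s$ increases and $R_{\ell+1}$ is nonincreasing. It is negative at $0$, because $\beta_s(0)<1$ when $s>3$. At $\lambda_{s,\ell}$ it is positive, since
\[
\beta_s(\lambda_{s,\ell})=R_\ell(\lambda_{s,\ell})>R_{\ell+1}(\lambda_{s,\ell}).
\]
Its unique zero therefore lies in $(0,\lambda_{s,\ell})$, which gives $\lambda_{s,\ell+1}<\lambda_{s,\ell}$.

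\emph{Part (2): the limit.} The function $\beta_s-q$ is strictly increasing, because $q=e^{-t}$ decreases in $\lambda$. It satisfies $\beta_s(0)-q(0)=\beta_s(0)-1<0$ and tends to $+\infty$, so it has a unique positive zero $\lambda_{s,\infty}$. From $\beta_s(\lambda_{s,\ell})=R_\ell(\lambda_{s,\ell})>q(\lambda_{s,\ell})$ I get $\lambda_{s,\ell}>\lambda_{s,\infty}$. The sequence is therefore decreasing and bounded below, so it converges to some $L\ge\lambda_{s,\infty}>0$.

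To pass to the limit in $R_\ell(\lambda_{s,\ell})=\beta_s(\lambda_{s,\ell})$, I need $R_\ell(\lambda_{s,\ell})\to q(L)$. Since $\tanh(\tfrac{\ell-1}{2}t)\to1$ uniformly for $t$ bounded away from $0$, $R_\ell\to q$ uniformly on $[\lambda_{s,\infty},\lambda_{s,2}]$. Combined with continuity of $q$ and $\beta_s$, this yields $\beta_s(L)=q(L)$, hence $L=\lambda_{s,\infty}$.

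The main obstacle is the identification step, i.e.\ pinning $\lambda_{\max}$ to the root of $R_\ell=\beta_s$ rather than merely to a solution of it. The positive-eigenvector construction should resolve it; what remains is routine care with the end-row equations.
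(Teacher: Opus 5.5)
Your proof is correct. For the monotonicity and limit steps it follows the paper. Your identity $\cosh(a-t)/\cosh a=\cosh t-\sinh t\tanh a$ is the paper's $R_\ell=q\,(1+q^{\ell-3})/(1+q^{\ell-1})$ written differently. Comparing the strictly increasing $\beta_s$ against $R_{\ell+1}<R_\ell$, and against $q<R_\ell$, is the same argument the paper uses.

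Where you genuinely differ is in identifying $\lambda_{\max}$ with the root. The paper runs the forward direction. Proposition~\ref{prop:ss} derives $R_\ell(\lambda)=\beta_s(\lambda)$ from the symmetric Perron vector, using the hyperbolic parametrization, which is valid only for $\lambda\ge 0$. The corollary then reads off the nonnegative root (for $s=3$) or the positive root (for $s>3$), tacitly assuming $\lambda_{\max}\ge 0$, respectively $>0$. For $s=3$, ``no positive root'' by itself only yields $\lambda_{\max}\le 0$. You run the converse: from the root you build an explicit strictly positive eigenvector and invoke the uniqueness clause of Lemma~\ref{lem:pf}(1). This settles the sign and the identification in one stroke, and makes the corollary independent of the forward half of Proposition~\ref{prop:ss}. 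Your uniform-convergence argument on $[\lambda_{s,\infty},\lambda_{s,2}]$ likewise supplies the justification the paper compresses into ``the monotone sequence converges to it.''

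The paper's route is shorter because it reuses the scalar reduction already in place. Yours is more self-contained and closes that small gap.
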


\begin{proof}
For $s=3$: $\beta_3(0)=1=R_\ell(0)$, and for $\lambda>0$ we have
$R_\ell(\lambda)\le1<\beta_3(\lambda)$ (with $R_2\equiv1$ when $\ell=2$ and
$R_\ell(\lambda)<1$ for $\ell\ge3$), so there is no positive root; hence
$\lambda_{\max}(R_{T_{3,\ell}})=0$.

Let $s>3$. For fixed $\lambda>0$ write $q=e^{-t}\in(0,1)$,
$t=\operatorname{arcosh}(1+\lambda)$; then $R_\ell(\lambda)=q\frac{1+q^{\ell-3}}{1+q^{\ell-1}}$,
so $R_{\ell+1}(\lambda)<R_\ell(\lambda)$. As $F_{s,\ell}(\lambda):=R_\ell(\lambda)-\beta_s(\lambda)$
is strictly decreasing, its unique positive root satisfies
$\lambda_{s,\ell+1}<\lambda_{s,\ell}$. Moreover $R_\ell(\lambda)\downarrow e^{-t}$
with $R_\ell(\lambda)>e^{-t}$, so the roots are bounded below by the unique
positive root of $e^{-t}=\beta_s(\lambda)$, and the monotone sequence converges to it.
\end{proof}

\begin{proposition}[Explicit formulas and asymptotics in $s$]\label{prop:asym}
Write $n:=s+1$. For $T_{s,\ell}$:
\begin{enumerate}[label=\rm(\arabic*)]
\item $\lambda_{s,2}=\dfrac{\sqrt{s+1}-2}{s+1}$;
\item for each fixed $\ell\ge2$,
$\lambda_{s,\ell}=\sqrt{\dfrac{2}{\ell(s+1)}}+O\!\left((s+1)^{-1}\right)$ as $s\to\infty$;
\item $\lambda_{s,\infty}=2^{1/3}(s+1)^{-2/3}+o\!\left((s+1)^{-2/3}\right)$ as $s\to\infty$.
\end{enumerate}
\end{proposition}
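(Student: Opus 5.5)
For part (1) I will take $\ell=2$, where $R_2\equiv1$, so Proposition~\ref{prop:ss} reduces to $\beta_s(\lambda)=1$. With $n=s+1$, the equation $\beta_s(\lambda)=1$ reads
\[
2\lambda+\tfrac2n-\tfrac{2(n-1)}{n(n\lambda+3)}=0 .
\]
Multiplying by $\tfrac12 n(n\lambda+3)$ gives $n\lambda(n\lambda+3)+(n\lambda+3)-(n-1)=0$, that is,
\[
(n\lambda+2)^2=n .
\]
The positive root, which is unique by Proposition~\ref{prop:ss} for $s>3$, is $\lambda=(\sqrt n-2)/n$.

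For parts (2) and (3) the strategy is the same: rescale $\lambda$ by the expected power of $n$, expand both sides of the scalar equation, and locate the root by a sign change. I will use the monotonicity from the proof of Proposition~\ref{prop:ss}. The function $F(\lambda)=R_\ell(\lambda)-\beta_s(\lambda)$, or $e^{-t}-\beta_s(\lambda)$ in the limit case, is strictly decreasing on $\lambda>0$. So it suffices to show that $F$ is positive at $\mu_0(1-\varepsilon)n^{-\alpha}$ and negative at $\mu_0(1+\varepsilon)n^{-\alpha}$ for large $n$, with $\varepsilon$ of the size of the claimed error.

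First, Corollary~\ref{cor:ss} gives the a priori bound $0<\lambda_{s,\infty}<\lambda_{s,\ell}\le\lambda_{s,2}=O(n^{-1/2})$, so only small $\lambda$ matter. In that regime $t=\operatorname{arcosh}(1+\lambda)$ satisfies $t^2=2\lambda+O(\lambda^2)$. For $\beta_s$ I will write
\[
\beta_s(\lambda)-1=2\lambda-\frac{2}{n\lambda}+O\!\Big(\frac1n+\frac1{n^2\lambda^2}\Big),
\]
which is valid when $n\lambda\to\infty$. This holds in both regimes, since $n\lambda\asymp n^{1/2}$ or $n^{1/3}$.

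For part (2), with $\ell\ge3$ fixed, a Taylor expansion of the cosh ratio gives
\[
R_\ell(\lambda)=1-\tfrac{t^2}{8}\big[(\ell-1)^2-(\ell-3)^2\big]+O(t^4)=1-(\ell-2)\lambda+O(\lambda^2).
\]
The equation then becomes $\ell\lambda-\frac{2}{n\lambda}=O(1/n)$. Setting $\lambda=\mu n^{-1/2}$, this is $\ell\mu-2/\mu=O(n^{-1/2})$. The left side has derivative bounded below near $\mu_0=\sqrt{2/\ell}$, so the root satisfies $\mu=\mu_0+O(n^{-1/2})$, that is, $\lambda=\sqrt{2/(\ell n)}+O(1/n)$. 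This is consistent with part (1) at $\ell=2$.

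For part (3) I use $e^{-t}=1-t+O(t^2)=1-\sqrt{2\lambda}+O(\lambda)$. The equation becomes $-\sqrt{2\lambda}+\frac{2}{n\lambda}=O(\lambda+1/n+(n\lambda)^{-2})$. Setting $\lambda=\mu n^{-2/3}$, the two leading terms are both of order $n^{-1/3}$, namely $-\sqrt{2\mu}\,n^{-1/3}$ and $\tfrac{2}{\mu}n^{-1/3}$. The remainder is $O(n^{-2/3})$. After multiplying by $n^{1/3}$, the equation reads $\sqrt{2\mu}-2/\mu=o(1)$, whose limiting root is $\mu^{3/2}=\sqrt2$, i.e.\ $\mu=2^{1/3}$. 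This yields $\lambda_{s,\infty}=2^{1/3}n^{-2/3}(1+o(1))$.

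I expect the main obstacle to be making the error terms uniform. I have to check that in the bracketing window $\mu\in[\mu_0/2,2\mu_0]$ all neglected terms are of strictly lower order, with constants independent of $n$. This is delicate in part (3), where the hyperbolic expansion and the $\beta_s$ expansion compete at order $n^{-1/3}$. I will handle it with explicit inequalities, $1-t\le e^{-t}\le1-t+t^2/2$ and $\sqrt{2\lambda}(1-\lambda)\le t\le\sqrt{2\lambda}$, rather than $O$-symbols.
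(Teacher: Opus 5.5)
Your proposal is correct and follows the paper's proof essentially step for step. For (1), your exact quadratic $(n\lambda+2)^2=n$ is the paper's $X^2+2X-(n-1)=0$ with $X=n\lambda+1$. For (2)--(3), you use the same scalings $\lambda=cn^{-1/2}$ and $\lambda=cn^{-2/3}$, the same leading-order balances $2/c=\ell c$ and $2/c=\sqrt{2c}$, and the same sign change combined with strict monotonicity of $R_\ell-\beta_s$ (resp.\ $q-\beta_s$). Your bracket of relative width $O(n^{-1/2})$ in (2) is a cleaner way to extract the $O((s+1)^{-1})$ error than the paper's ``let $\varepsilon\downarrow0$'' phrasing.
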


\begin{proof}
(1) Since $R_2\equiv1$, $\beta_s(\lambda_{s,2})=1$, i.e.\ $n\lambda+1=\frac{n-1}{n\lambda+3}$.
With $X:=n\lambda+1$, $X^2+2X-(n-1)=0$, so $X=\sqrt n-1$ and
$\lambda_{s,2}=(\sqrt n-2)/n=(\sqrt{s+1}-2)/(s+1)$.

(2) Fix $\ell$. By Proposition~\ref{prop:ss}, $\lambda_{s,\ell}$ is the unique
positive zero of the strictly decreasing $F_{s,\ell}$. Using
$\operatorname{arcosh}(1+\lambda)=\sqrt{2\lambda}+O(\lambda^{3/2})$ and
$\cosh u=1+\tfrac{u^2}2+O(u^4)$, for $\lambda=cn^{-1/2}$,
\[
  R_\ell(cn^{-1/2})=1-(\ell-2)c\,n^{-1/2}+O(n^{-1}),\quad
  \beta_s(cn^{-1/2})=1+\bigl(2c-\tfrac2c\bigr)n^{-1/2}+O(n^{-1}),
\]
so $F_{s,\ell}(cn^{-1/2})=(\tfrac2c-\ell c)n^{-1/2}+O(n^{-1})$. With
$c_\ell:=\sqrt{2/\ell}$, the bracket is positive for $c=c_\ell-\varepsilon$ and
negative for $c=c_\ell+\varepsilon$. By strict monotonicity of $F_{s,\ell}$ and
the intermediate value theorem, $\lambda_{s,\ell}\in((c_\ell-\varepsilon)n^{-1/2},(c_\ell+\varepsilon)n^{-1/2})$
for all large $s$; letting $\varepsilon\downarrow0$ and using the error term,
$\lambda_{s,\ell}=\sqrt{2/(\ell(s+1))}+O((s+1)^{-1})$.

(3) By Corollary~\ref{cor:ss}, $\lambda_{s,\infty}$ is the unique positive zero of
$H_s(\lambda):=q(\lambda)-\beta_s(\lambda)$, which is strictly decreasing. For
$\lambda=cn^{-2/3}$, using $q(\lambda)=1-\sqrt{2\lambda}+O(\lambda)$,
\[
  q(cn^{-2/3})=1-\sqrt{2c}\,n^{-1/3}+O(n^{-2/3}),\quad
  \beta_s(cn^{-2/3})=1-\tfrac2c n^{-1/3}+O(n^{-2/3}),
\]
so $H_s(cn^{-2/3})=(\tfrac2c-\sqrt{2c})n^{-1/3}+O(n^{-2/3})$. With
$c_\infty:=2^{1/3}$, the bracket changes sign across $c_\infty$, so by monotonicity
$\lambda_{s,\infty}=2^{1/3}(s+1)^{-2/3}+o((s+1)^{-2/3})$.
\end{proof}

\begin{remark}[The case $S_{4,4}$]
In the first genuinely positive case $s=4$, the family decreases strictly with the
subdivision length but converges to a small positive limit rather than to $0$;
numerically $\lambda_{4,\infty}\approx0.00716036$. Thus the threshold family is
$S_{3,3}$: it stays on the zero layer for all subdivision lengths, whereas $S_{4,4}$
and all larger symmetric double stars decrease to positive limits.
\end{remark}

\subsection{Subdivision can strictly increase $\lambda_{\max}$}\label{subsec:asymmetric-example}

This subsection provides a numerical illustration of the general framework.
While the one-step criterion gives a rigorous inequality ($\lambda_2 > 0$),
the long-chain limit and its coincidence with $S_{5,5}$ are observed
numerically; a rigorous analytic proof is left for future work.

We illustrate the framework on a tree where subdivision \emph{increases}
$\lambda_{\max}$. Let $T$ consist of a central edge $xy$ with
$d_x=3$ ($x$ has two leaf neighbours) and $d_y=6$ ($y$ has five leaf
neighbours). 
In \cite{BaiChengHua2026}, it was calculated that 
$\lambda_1 := \lambda_{\max}(R_T) = 0,$
i.e., the tree is Ricci-flat.

\subsubsection*{Interface quantities $u$ and $v$}

Recall from Definition~\ref{def:uv} that
$u = (S_y - 2w)/(d_y w)$ and $v = (S_x - 2w)/(d_x w)$,
where $w = \mathbf{w}_{xy}$ is the Perron weight of the central edge.

Let $p$ (resp.\ $r$) denote the common weight of the leaf edges at $x$
(resp.\ $y$). Then $S_x = 2p + w$ and $S_y = 5r + w$, with $d_x = 3$,
$d_y = 6$.

From the eigenvalue equation at a leaf edge of $x$ with $\lambda_1 = 0$,
one obtains $w = 3p$, i.e.\ $p = w/3$. Hence
\[
  v = \frac{S_x - 2w}{d_x w}
    = \frac{2p + w - 2w}{3w}
    = \frac{2p - w}{3w}
    = \frac{2(w/3) - w}{3w}
    = -\frac{1}{9}.
\]

From the eigenvalue equation at a leaf edge of $y$ with $\lambda_1 = 0$,
one obtains $w = 3r$, i.e.\ $r = w/3$. Hence
\[
  u = \frac{S_y - 2w}{d_y w}
    = \frac{5r + w - 2w}{6w}
    = \frac{5r - w}{6w}
    = \frac{5(w/3) - w}{6w}
    = \frac{1}{9}.
\]

Thus
\[
  u = \frac{1}{9},\qquad v = -\frac{1}{9},\qquad
  uv = -\frac{1}{81},\qquad u + v = 0 = \lambda_1.
\]
\subsubsection*{One-step subdivision}

For the subdivided tree $T^{(2)}$, Theorem~\ref{thm:one-step} gives
\[
  \lambda_2 < \lambda_1 \;\Longleftrightarrow\; uv > -\frac{\lambda_1}{2}.
\]

Since $\lambda_1=0=\lambda_{\max}(R_T)$, Lemma~\ref{lem:lambda1-above-R0} gives
$-R_0\succ0$. Moreover, Proposition~\ref{prop:uv}(3) yields
\[
  \det S(0)=uv+\frac{\lambda_1}{2}=-\frac1{81}<0.
\]
Hence $S(0)$ is indefinite. By Lemma~\ref{lem:schur}(S2),
$-R_{T^{(2)}}$ is not positive semidefinite. Therefore
\[
  \lambda_2=\lambda_{\max}(R_{T^{(2)}})>0=\lambda_1.
\]

The exact value of $\lambda_2$ is the unique positive root of
$\Phi_A(\lambda)\Phi_B(\lambda)=1$. Using the explicit formulas
\[
  \Phi_A(\lambda)=2\lambda+\frac53-\frac{4}{9(\lambda+1)},\qquad
  \Phi_B(\lambda)=2\lambda+\frac43-\frac{5}{9(2\lambda+1)},
\]
numerical solution yields $\lambda_2 \approx 0.008599$. 

\subsubsection*{Feedback functions and long-chain limit}

The feedback functions follow from Proposition~\ref{prop:local-feedback}.
For the left branch ($d_x=3$, two leaves) and the right branch
($d_y=6$, five leaves),
\[
  \Phi_A(\lambda)=\beta_2(\lambda)=2\lambda+\frac53-\frac{4}{9(\lambda+1)},
  \qquad
  \Phi_B(\lambda)=\beta_5(\lambda)=2\lambda+\frac43-\frac{5}{9(2\lambda+1)},
\]
where $\beta_s$ is as in Proposition~\ref{prop:ss}. Recall
$q(\lambda)=1+\lambda-\sqrt{\lambda(\lambda+2)}$, the decaying mode of the
chain recurrence. At $\lambda=0$,
\[
  \Phi_A(0)=\frac{11}{9}>1=q(0),\qquad
  \Phi_B(0)=\frac{7}{9}<1=q(0).
\]
Since $\Phi_A$ is strictly increasing and $q$ strictly decreasing, the
equation $\Phi_A(\lambda)=q(\lambda)$ has \emph{no} positive root: the left
branch alone cannot support a positive limit. In contrast,
$\Phi_B(\lambda)=q(\lambda)$ has a unique positive root
\[
  \lambda_B\approx 0.0172371478.
\]

Direct numerical diagonalization of $R_{T^{(\ell)}}$ (validated by the
sanity checks $\lambda_{\max}(R_{T_{3,\ell}})\equiv0$ and
$\lambda_{4,\infty}\approx0.00716$) gives
\[
  \lambda_2\approx0.008599,\quad
  \lambda_3\approx0.0123,\quad
  \lambda_4\approx0.0142,\quad
  \lambda_5\approx0.0153,\ \ldots,\quad
  \lambda_\ell\uparrow\lambda_B\approx0.017237.
\]
The sequence is strictly \emph{increasing} from $\lambda_1=0$ and converges
to $\lambda_B$: the right branch, being the more ``responsive'' one
($\Phi_B(0)<q(0)$), governs the long-chain limit, while the ``stiff'' left
branch ($\Phi_A(0)>q(0)$) contributes no matching root.

Finally, the limit value is not merely numerical coincidence. Because the
right branch is one half of a $5$-star, $\Phi_B\equiv\beta_5$ \emph{identically},
so the limit equation $\Phi_B(\lambda)=q(\lambda)$ is literally the same as the
long-chain limit equation $\beta_5(\lambda)=q(\lambda)$ of the \emph{symmetric}
double star $S_{5,5}$. Thus $\lambda_B$ coincides exactly with
$\lambda_{5,\infty}$; the convergence $\lambda_\ell\to\lambda_B$ itself is the
asymmetric instance of the conjecture in Remark~\ref{rem:convergence}(2),
left for future work.

\begin{figure}[ht]
\centering
\begin{tikzpicture}
\begin{axis}[
  width=0.72\textwidth, height=0.3\textwidth,
  xlabel={subdivision length $\ell$},
  ylabel={$\lambda_\ell=\lambda_{\max}(R_{T^{(\ell)}})$},
  xmin=1.5, xmax=42, ymin=-0.003, ymax=0.05,
  xtick={2,5,10,20,30,40},
  legend pos=north east, legend cell align=left,
  grid=both, grid style={gray!18},
  tick label style={font=\small}, label style={font=\small},
  every axis plot/.append style={thick}]

% S_{4,4}: decreasing to positive limit
\addplot[mark=*,blue] coordinates {
 (2,0.04721360)(3,0.03503235)(4,0.02811377)(5,0.02365655)
 (6,0.02055044)(8,0.01652015)(12,0.01236967)(20,0.00914021)(40,0.00736850)};
\addlegendentry{$S_{4,4}$ (decreasing)}
\addplot[dashed,blue,forget plot] coordinates {(1.5,0.00716036)(42,0.00716036)};
\node[blue,font=\scriptsize,anchor=west] at (axis cs:30,0.0102)
   {$\lambda_{4,\infty}\!\approx\!0.00716$};

% asymmetric (3,6): increasing to lambda_B
\addplot[mark=triangle*,red] coordinates {
 (2,0.00859922)(3,0.01228730)(4,0.01420520)(5,0.01530685)
 (6,0.01597788)(8,0.01667726)(12,0.01711696)(20,0.01723108)(40,0.01723714)};
\addlegendentry{$(d_x,d_y)=(3,6)$ (increasing)}
\addplot[dashed,red,forget plot] coordinates {(1.5,0.01723715)(42,0.01723715)};
\node[red,font=\scriptsize,anchor=west] at (axis cs:24,0.0192)
   {$\lambda_B=\lambda_{5,\infty}\!\approx\!0.01724$};

% S_{3,3}: constant zero
\addplot[mark=square*,black] coordinates {
 (2,0)(3,0)(4,0)(5,0)(6,0)(8,0)(12,0)(20,0)(40,0)};
\addlegendentry{$S_{3,3}$ (constant $\equiv0$)}

% lambda_1 = 0 reference for asymmetric start
\node[font=\scriptsize,anchor=south west] at (axis cs:2,0.0005){$\lambda_1=0$};
\end{axis}
\end{tikzpicture}
\caption{The three regimes of edge subdivision for the Ricci matrix, in sharp
contrast to the unconditional decrease of the adjacency spectral radius
(Hoffman--Smith). The symmetric double star $S_{4,4}$ \emph{decreases} to a
positive limit; the threshold family $S_{3,3}$ stays Ricci-flat
($\lambda_\ell\equiv0$) for every $\ell$; and the asymmetric tree with
$(d_x,d_y)=(3,6)$ \emph{increases} from $\lambda_1=0$ to
$\lambda_B=\lambda_{5,\infty}$, the limit being governed by the more responsive
($5$-leaf) branch. Dashed lines mark the long-chain limits.}
\label{fig:three-regimes}
\end{figure}

\noindent\textbf{Acknowledgements.}
S.~Bai is supported by NSFC, no.~12301434. B.~Hua is supported by NSFC, no.~12371056.

\bibliographystyle{plain}
\bibliography{referencesxifen}

\end{document}